\theoremstyle{plain}
\newtheorem{theorem}{Theorem}
\newtheorem{definition}[theorem]{Definition}
\newtheorem{lemma}[theorem]{Lemma}
\newtheorem{proposition}[theorem]{Proposition}
\newtheorem{corollary}[theorem]{Corollary}
\newtheorem{remark}[theorem]{Remark}
\newtheorem{conjecture}[theorem]{Conjecture}
\newcommand\es{\varnothing}
\newcommand\ol{\overline}
\newcommand\Aut{\mathrm{Aut}}
\newcommand\sQ{{\mathcal Q}}
\newcommand\sH{{\mathcal H}}
\newcommand\TT{{\mathbb T}}
\newcommand\sF{{\mathcal F}}
\newcommand\RR{{\mathbb R}}
\newcommand\ZZ{{\mathbb Z}}
\newcommand\PP{{\mathbb P}}
\newcommand\om{\omega}
\newcommand\be{\beta}
\newcommand\De{\Delta}
\newcommand\qq{\qquad}
\newcommand\q{\quad}
\newcommand\resp{respectively}
\newcommand\oo{\infty}
\newcommand\sG{{\mathcal G}}
\newcommand\sT{{\mathcal T}}
\newcommand\Ga{\Gamma}
\newcommand\Om{\Omega}
\newcommand\La{\Lambda}
\newcommand\lra{\leftrightarrow}
\newcommand\pc{p_{\text{\rm c}}}
\newcommand\pcs{\pc^{\text{\rm site}}}
\newcommand\pu{p_{\text{\rm u}}}
\newcommand\pus{\pu^{\text{\rm site}}}
\newcommand\pcb{\pc^{\text{\rm bond}}}
\newcommand\pub{\pu^{\text{\rm bond}}}
\renewcommand\ell{l}
\newcommand\pd{\partial}
\newcommand\sm{\setminus}
\newcounter{mycount}\newcounter{mycount2}
\newenvironment{romlist}{\begin{list}{\rm(\roman{mycount2})}%
   {\usecounter{mycount2}\labelwidth=1cm\itemsep 0pt}}{\end{list}}
\newenvironment{letlist}{\begin{list}{\rm(\alph{mycount})}%
   {\usecounter{mycount}\labelwidth=1cm\itemsep 0pt}}{\end{list}}
\newcommand\wtilde{\widetilde}
\newcommand\ga{\gamma}
\newcommand\what{\widehat}
\newcommand\nst{non-self-touching}
\numberwithin{equation}{section}
\numberwithin{theorem}{section}
\numberwithin{figure}{section}
\title{Hyperbolic site percolation}
\author{Geoffrey R.\ Grimmett}
\address{(GRG) Centre for
Mathematical Sciences, Cambridge University, Wilberforce Road,
Cambridge CB3 0WB, UK} 
\email{grg@statslab.cam.ac.uk}
\urladdr{\url{http://www.statslab.cam.ac.uk/~grg/}}
\address{(ZL) Department of Mathematics,
University of Connecticut, Storrs, Connecticut 06269-3009, USA} \email{zhongyang.li@uconn.edu}
\urladdr{\url{http://www.math.uconn.edu/~zhongyang/}}
\author{Zhongyang Li}
\date{2 March 2022, revised 23 June 2024}
\keywords{Percolation, site percolation, critical probability, hyperbolic plane, matching graph, matching pair}
\subjclass[2010]{60K35, 82B43}
\begin{document}

\begin{abstract}
Several results are presented for site percolation on quasi-transitive, planar graphs $G$ with one end, when properly
embedded in either the Euclidean or hyperbolic plane. If 
$(G_1,G_2)$ is a matching pair derived from some quasi-transitive mosaic $M$, 
then $\pu(G_1)+\pc(G_2)=1$,
where $\pc$ is the critical probability for the existence of an infinite cluster, 
and $\pu$ is the critical value for the existence of a \emph{unique} such cluster.
This fulfils and extends to the hyperbolic plane an observation of Sykes and Essam (1964),
and it extends to quasi-transitive site models a theorem of Benjamini and Schramm 
(Thm 3.8, \emph{J.\ Amer.\ Math.\ Soc.} 14 (2001) 487--507) for transitive bond percolation.

It follows that $\pu (G)+\pc (G_*)=\pu(G_*)+\pc(G)=1$, where 
$G_*$ denotes the matching graph of $G$. 
In particular,  $\pu(G)+\pc(G)\ge 1$ and hence, when $G$ is amenable we have $\pc(G)=\pu(G) \ge \frac12$.
When combined with the main result of the companion paper by the same authors (\lq Percolation
critical probabilities of matching lattice-pairs', \emph{Random Struct.\ Alg.} (2024)),
we obtain for transitive $G$ that the strict inequality $\pu(G)+\pc(G)> 1$ holds
if and only if $G$ is not a triangulation.

A key technique is a method for expressing a planar site percolation process on a matching pair
in terms of a dependent bond process on the corresponding dual pair of graphs.
Amongst other matters, the results reported here answer positively two conjectures of Benjamini and Schramm 
(Conj.\ 7, 8, \emph{Electron.\ Commun.\ Probab.} 1 (1996) 71--82)
in the case of quasi-transitive graphs.

\end{abstract}
\maketitle

\section{Introduction and results}

\subsection{Percolation on planar graphs}\label{sec:int}

Percolation was introduced in 1957 by Broadbent and Hammersley \cite{BH57} as a model for the spread of fluid through a random medium. Percolation provides a natural mathematical setting for such topics as the study of disordered materials, magnetization, and the spread of disease. See \cite{HDC18,GP,LP} for recent accounts of the theory. 
We consider here site percolation on a graph $G=(V,E)$, assumed to be infinite, locally finite, connected, and planar. 
 The current work has two linked objectives.  

Our major objective is to study the relationship between the percolation critical point $\pc$ 
and the critical point $\pu$ marking the existence of a \emph{unique} infinite cluster. More specifically, 
we establish the formula $\pus(G_1)+\pcs(G_2)=1$ for a matching pair $(G_1,G_2)$  of graphs arising from a
quasi-transitive mosaic, appropriately embedded in either the Euclidean or hyperbolic plane.
See Section \ref{ssec:mainres1}.

Setting $(G_1,G_2)=(G,G_*)$ above, with $G_*$ the matching graph
of $G$, we obtain 
$$
\pus(G)+\pcs(G_*)=\pus(G_*)+\pcs(G)=1.
$$
It follows that $\pus(G)+\pcs(G)>1$ if and only if the strict inequality $\pcs(G_*)<\pcs(G)$ holds.
Necessary and sufficient conditions for the last inequality are established in \cite{GL-match}
(for transitive graphs) and \cite{G24} (for quasi-transitive graphs).
For transitive $G$, this implies that $\pus(G)+\pcs(G)>1$ if and only if $G$ is not a triangulation.

Our second objective, which is achieved in the process of proving the above formula, is to validate 
Conjectures 7 and 8 of Benjamini and Schramm \cite{bs96} concerning the existence of infinitely many infinite clusters.
Details of these conjectures are found in Section \ref{sec:mainresBS}.

The organization of the paper is presented in Section \ref{ssec:summary}.

\subsection{Critical points of matching pairs}\label{ssec:mainres1}

Since loops and multiple edges have no effect on the existence of infinite clusters in site percolation,  the graphs considered in this article are generally assumed to be \emph{simple}
(whereas their dual graphs may be non-simple).
The main results proved in this paper are as follows (see Sections \ref{sec:def}--\ref{ssec:percnot} 
for explanations of the standard
notation used here).  

The word \lq transitive' shall mean \lq vertex-transitive' throughout this work.
We denote by
\begin{align*}
\sG: \q&\text{all infinite, locally finite, planar, $2$-connected, simple graphs},\\
\sT:  \q&\text{the subset of $\sG$ containing all such transitive graphs},\\
\sQ: \q&\text{the subset of $\sG$ containing all such quasi-transitive graphs.}
\end{align*}
Since the work reported here concerns matching and dual graphs, 
the graphs in $\sG$ will be considered in their plane embeddings. 
The most interesting such graphs turn out to be those with one end.
We shall recall in Section \ref{sec:emb} that one-ended graphs in $\sT$ have unique proper embeddings in
the Euclidean/hyperbolic plane up to homeomorphism,  and hence their matching and dual graphs are uniquely defined.
The situation is more complicated for one-ended graphs in $\sQ$, in which case we fix a plane embedding of
$G\in\sQ$  for which the dual graph $G^+$ is quasi-transitive. Such an embedding is
called \emph{canonical}; if $G$ has connectivity $2$,
a canonical embedding need not be unique (even up to homeomorphism), 
but its existence is guaranteed by Theorem \ref{p21}(c).

\begin{figure}
\centerline{\includegraphics[width=0.85\textwidth]{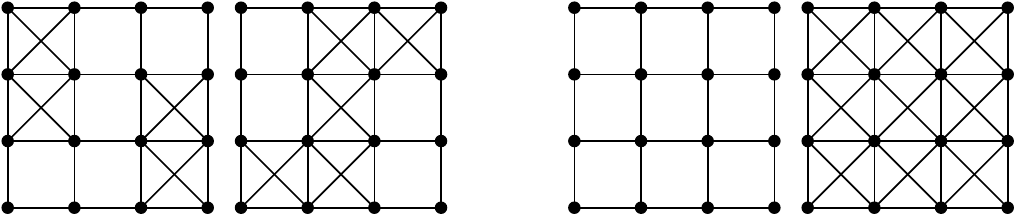}}
\caption{Two matching pairs derived from the square lattice $\ZZ^2$. 
Each $3\times3$ grid is repeated periodically about $\ZZ^2$. The pair on the right
generates $\ZZ^2$ and its matching graph.}\label{fig:mg}
\end{figure}

Matching pairs of graphs were introduced by Sykes and Essam \cite{SE64}
and explored further by Kesten \cite{K82}. Let $M\in\sQ$
be  one-ended and canonically embedded in the plane (we call $M$ a \emph{mosaic} following the earlier literature). 
Let $\sF_4=\sF_4(M)$ be the set of faces of $M$ bounded by $n$-cycles with $n\ge 4$,
and let $\sF_4=F_1\cup F_2$ be an arbitrary
 quasi-transitive partition of $\sF_4$. The graph $G_i$ is obtained from $M$ by adding 
all diagonals to all faces in $F_i$. The pair $(G_1,G_2)$ is
called a \emph{matching pair}.  
The \emph{matching graph} $G_*$ of a one-ended graph $G\in\sQ$ is
obtained by adding all diagonals to all faces in $\sF_4(G)$.  Thus, $(G,G_*)$ is an instance
of a  matching pair. Two examples of matching pairs are given in Figure \ref{fig:mg}.

The notation $\pu$ denotes the critical value for the existence of a \emph{unique} infinite cluster. 
Further notation and background for percolation is deferred to Section \ref{ssec:percnot}.

\begin{theorem}\label{m1}
\mbox{\hfil}
\begin{letlist}
\item Let $(G_1,G_2)$ be a matching pair derived from the mosaic $M\in\sQ$. We have that
\begin{equation}\label{eq:-5}
\pus (G_1)+\pcs (G_2) =1.
\end{equation}
\item 
Let $G\in\sQ$ be one-ended. Then
\begin{equation}\label{eq:-6}
\pus (G)+\pcs (G)\geq 1.
\end{equation} 
If $G$ is transitive, equality holds in \eqref{eq:-6} if and only if $G$ is a triangulation.
\end{letlist}
\end{theorem}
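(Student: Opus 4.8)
Since $(G,G_*)$ is itself a matching pair, part (b) reduces quickly to part (a), so the real content is the identity \eqref{eq:-5}. The plan is to prove it through an almost-sure \emph{complementation identity} for the site measure $\PP_p$ on the common vertex set of the pair (each site open with probability $p$ and closed with probability $1-p$):
\[
\{\,\text{there is a unique infinite open $G_1$-cluster}\,\}\ \Longleftrightarrow\ \{\,\text{there is no infinite closed $G_2$-cluster}\,\}.
\]
Because closed sites have density $1-p$, an infinite closed $G_2$-cluster is precisely an infinite open cluster for the percolation of parameter $1-p$ on $G_2$. The left-hand event holds almost surely exactly when $p>\pus(G_1)$, and the right-hand event almost surely exactly when $p>1-\pcs(G_2)$; equating the two thresholds gives $\pus(G_1)=1-\pcs(G_2)$, which is \eqref{eq:-5}.

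The first ingredient is the key technique of the paper: encode the site configuration on the mosaic $M$ as a dependent bond configuration on the dual pair $(G_1^+,G_2^+)$, arranged so that open $G_1$-adjacencies become open paths in $G_1^+$ while closed $G_2$-adjacencies become the complementary open bonds in $G_2^+$. The gain is that the matching relation between $G_1$ and $G_2$ becomes ordinary planar bond duality between $G_1^+$ and $G_2^+$, for which the Jordan-curve dichotomy is clean. From it one reads off the deterministic \emph{separator lemma}: two distinct infinite open $G_1$-clusters must be separated by a doubly-infinite closed dual path, hence by an infinite closed $G_2$-cluster (if the separating closed set were finite, an open connection could be routed around it, merging the two clusters). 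Equivalently, the absence of an infinite closed $G_2$-cluster forces \emph{at most one} infinite open $G_1$-cluster; together with the Newman--Schulman trichotomy $N\in\{0,1,\infty\}$, valid here by insertion-tolerance and quasi-transitivity, this gives the uniqueness half of the complementation identity.

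The two remaining ingredients are genuinely probabilistic and carry the main difficulty, since each fails for individual configurations in the non-amenable (hyperbolic) regime. One must show (i) \emph{existence}: if there is no infinite closed $G_2$-cluster then an infinite open $G_1$-cluster does exist---deterministically false, since alternating concentric annuli leave every cluster of both colours finite---so the all-finite scenario is excluded by a mass-transport or second-moment estimate on the quasi-transitive, one-ended graph; and (ii) the reverse implication, that a unique infinite open $G_1$-cluster admits \emph{no} infinite closed $G_2$-cluster---again deterministically false, since a half-plane of open sites facing a half-plane of closed sites has a unique infinite open cluster alongside an infinite closed one. For (ii) I would appeal to ergodicity, insertion-tolerance, and cluster indistinguishability: almost surely an infinite closed $G_2$-cluster separates the plane into several infinite open regions, each carrying its own infinite open cluster, so the open cluster cannot be unique. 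The crux throughout is that a hyperbolic infinite cluster may have infinitely many ends, so these separation statements must be run through the dual circuits provided by the bond reformulation rather than a naive two-ended picture; supplying that rigour is the principal obstacle.

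For part (b), set $(G_1,G_2)=(G,G_*)$ in \eqref{eq:-5} to obtain $\pus(G)+\pcs(G_*)=1$. Since $G$ and $G_*$ have the same vertex set and $E(G)\subseteq E(G_*)$, every open path in $G$ is open in $G_*$, whence $\pcs(G_*)\le\pcs(G)$ by monotonicity; substituting yields $\pus(G)+\pcs(G)=1+\bigl(\pcs(G)-\pcs(G_*)\bigr)\ge 1$, which is \eqref{eq:-6}. For the equality clause with $G$ transitive: a triangulation has $\sF_4(G)=\es$, so no diagonals are added, $G_*=G$, and equality holds. If $G$ is not a triangulation then $\sF_4(G)\ne\es$ and $G_*$ is a proper supergraph of $G$; the strict inequality $\pcs(G_*)<\pcs(G)$, equivalently $\pus(G)+\pcs(G)>1$, is exactly the criterion established in the companion paper \cite{GL-match}, which I would cite to conclude.
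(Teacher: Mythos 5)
Your architecture matches the paper's: part (b) is reduced to part (a) exactly as in Remark \ref{rem:enh}, and \eqref{eq:-5} is extracted from an almost-sure dichotomy for the pair $(N(G_1),\ol N(G_2))$, which the paper obtains (Lemma \ref{l35}) by encoding the site configuration as a one-dependent bond configuration on $\what G_1$ and invoking the dual trichotomy $(N,N^+)\in\{(0,1),(1,0),(\infty,\infty)\}$ of \cite[Thm 8.30]{LP} for invariant, weakly insertion- and deletion-tolerant bond percolation. Two small points: converting the a.s.\ equivalence of the two events into the threshold identity $\pus(G_1)=1-\pcs(G_2)$ tacitly uses the monotonicity of the uniqueness phase (Lemma \ref{lsu}); and your heuristic that an infinite closed $G_2$-cluster splits the plane into several infinite open regions each carrying its own infinite open cluster is not literally correct --- in the paper this whole step is delegated to \cite[Thm 8.30]{LP} rather than argued directly.

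The genuine gap is the amenable case, which the statement covers but your plan does not. The trichotomy you lean on is proved in \cite[Thm 8.30]{LP} only for \emph{non-amenable} one-ended graphs, so the ``dual circuits'' machinery is unavailable when $M$ is Euclidean. There $\pus=\pcs$ and \eqref{eq:-5} becomes the Sykes--Essam identity $\pcs(G_1)+\pcs(G_2)=1$, which the paper proves with different tools: exclusion of coexistence $(N(G_1),\ol N(G_2))=(1,1)$ via double periodicity of the canonical embedding (Theorem \ref{thm:embedqt}, resting on Seifter--Trofimov) fed into the non-coexistence theorem \cite[Thm 1.5]{DRT} (Lemma \ref{l73}); and exclusion of the all-finite pair $(0,0)$ via sharpness, i.e.\ exponential decay of subcritical connectivity \cite{AV07} combined with the Kesten circuit construction \cite[Prop.\ 2.1]{K82} (Lemma \ref{lm33}). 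Your proposed substitutes --- a ``mass-transport or second-moment estimate'' for existence and ``cluster indistinguishability'' for non-coexistence --- are non-amenable-flavoured and do not close either step in the Euclidean setting; in particular, ruling out $(0,0)$ genuinely requires exponential decay below $\pcs(G_1)$, not a second-moment bound. For the non-amenable case your outline is essentially the paper's proof.
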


In the context of \eqref{eq:-5}, Sykes and Essam \cite[eqn (7.3)]{SE64} presented motivation for the exact formula 
\begin{equation}\label{eq:exact}
\pcs(G_1)+\pcs(G_2)=1,
\end{equation}
and this has been verified in a number of cases when $G$ is amenable (see \cite{vdB81}).
This formula does not hold for non-amenable graphs. Equation \eqref{eq:-5}
appears without proof in \cite[eqn (4)]{Mertens} for a restricted class of graphs.

\begin{remark}[Strict inequality]\label{rem:enh}
Equation \eqref{eq:-6}  follows from \eqref{eq:-5} with $(G_1,G_2)=(G,G_*)$,
by the inequality $\pcs(G)\ge \pcs(G_*)$. 
This weak inequality holds trivially since $G$ is a subgraph of $G_*$. The corresponding strict inequality 
$\pcs(G)>\pcs(G_*)$ is investigated in the companion papers \cite{G24,GL-match}, where
necessary and 
sufficient conditions are presented. 
By \eqref{eq:-5},
$$
\pus(G)-\pus(G_*) = \pcs(G)-\pcs(G_*) \ge 0,
$$
so that strict inequality for $\pcs$ is equivalent to strict inequality for $\pus$.
\end{remark}

\begin{remark}[Canonical embeddings]\label{rem:uniq}
When $G$ has connectivity $2$,
it may possess more than one canonical embedding; by Theorem \ref{m1},
$\pcs(G_*)$ and $\pus(G_*)$ are independent of the choice of canonical embedding.
This may be seen directly by observing that, in situations where there is a choice of embedding, 
the two-point connectivity functions are equal.
\end{remark}

\begin{remark}[Amenability]\label{rem:amen}
If $G\in\sQ$ is one-ended and in addition amenable,
by the uniqueness of the infinite cluster \cite{AKN,BK89},
we have $\pcs (G)=\pus (G)$; in this case,  $\pcs (G)\geq \frac{1}{2}$ 
by \eqref{eq:-6}. 
If $G$ is transitive, 
we have $\pcs(G)=\frac12$ if and only if $G$ is the usual amenable, triangular lattice. 
\end{remark}

The dual graph of a plane graph $G$ is denoted $G^+$.

\begin{remark}[Bond percolation]\label{rem:m3}
Theorem \ref{m1} may be compared with the corresponding results for bond percolation.
It is proved in \cite[Thm 3.8]{bs00} that 
$$
\pcb (G)+\pub (G^+)=1
$$ 
for any non-amenable, \emph{transitive} $G\in\sT$.
If, instead, $G\in\sT$ is amenable, it is standard that $\pub(G^+)=\pcb(G^+)=
1-\pcb(G)$.
These facts are extended to \emph{quasi-transitive} graphs in \cite[Thm 8.31]{LP}.
We make use of some elements of \cite{bs00, LP} here, while studying 
the more general site percolation directly
via the concept of pairs of matching graphs.
\end{remark}

\subsection{Existence of infinitely many infinite clusters}\label{sec:mainresBS}

A number of problems for percolation on non-amenable graphs were formulated by Benjamini and Schramm 
in their influential paper \cite{bs96}, including the following two conjectures. 

\begin{conjecture} [\mbox{\cite[Conj.\ 7]{bs96}}]\label{c12} 
Consider site percolation on an infinite, connected,  planar graph $G$ with minimal degree at least $7$.  Then, for any $p\in(\pcs, 1-\pcs)$,  we have $\PP_p(N=\infty)=1$.   Moreover,  it is the case that $\pcs<\frac 12$, so the above interval is invariably non-empty.
\end{conjecture}

It was proved in \cite[Thm 2]{HP19} that $\pcs<\frac12$  for planar graphs with vertex-degrees at least $7$. 

\begin{conjecture} [\mbox{\cite[Conj.\ 8]{bs96}}]\label{c11} 
Consider site percolation on a planar graph $G$ satisfying $\PP_{\frac12}(N\ge 1)=1$. Then $\PP_{\frac12}(N=\infty)=1$.
\end{conjecture}

Percolation in the hyperbolic plane was later studied by Benjamini and Schramm \cite{bs00}. In the current paper, we extend certain results of \cite{bs00} to amenable planar graphs and to site percolation, and we confirm Conjectures \ref{c12} and \ref{c11} for all planar, quasi-transitive graphs.

Conjectures \ref{c12} and \ref{c11} were verified  in \cite{ZL17} when $G$ is a regular triangular tiling (or \lq triangulation') 
of the hyperbolic plane $\sH$ for which each vertex has degree 
at least $7$. A significant property of a triangulation  is that its matching graph is the same as the original graph.

The next two theorems establish Conjectures \ref{c12} and \ref{c11} for planar, quasi-transitive graphs.

\begin{theorem}\label{sc1}
Consider site percolation on a graph $G\in\sQ$, each vertex of which has degree $7$ or more. 
\begin{letlist}
\item For every $p\in (\pcs , 1-\pcs )$,  there  exist, $\PP_p$-a.s., infinitely  many  
infinite  $1$-clusters and infinitely many infinite $0$-clusters.
\item For every $p\in [0,1]$, there exists, $\PP_p$-a.s., at least one infinite cluster 
that is either a $1$-cluster or a $0$-cluster.
\end{letlist}
\end{theorem}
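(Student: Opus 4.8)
The one substantial ingredient is Theorem~\ref{m1}(b); granted it, both parts reduce to the standard theory of the number of infinite clusters. Throughout write $\pcs=\pcs(G)$. Since every vertex has degree at least $7$, a combinatorial Gauss--Bonnet argument shows $G$ to be non-amenable, and $G$ is one-ended; granting the latter (a routine preliminary for such $2$-connected planar quasi-transitive graphs), Theorem~\ref{m1} applies to $G$. Moreover $\pcs<\frac12$ by \cite[Thm~2]{HP19}, whence $\pcs<1-\pcs$.

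I would treat part (b) first, by a dichotomy on $p$. If $p>\pcs$, then by the definition of $\pcs$ there is $\PP_p$-a.s.\ an infinite $1$-cluster. If instead $1-p>\pcs$ (that is, $p<1-\pcs$), then the closed sites under $\PP_p$ are distributed as the open sites under $\PP_{1-p}$, so they percolate and there is $\PP_p$-a.s.\ an infinite $0$-cluster. As $\pcs<\frac12$, no $p$ can satisfy both $p\le\pcs$ and $p\ge1-\pcs$ (this would force $\pcs\ge\frac12$); hence every $p\in[0,1]$ lies in at least one case, with the endpoints $p=\pcs$ and $p=1-\pcs$ handled by the closed and open cases respectively. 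This gives (b).

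For part (a), fix $p\in(\pcs,1-\pcs)$ and look first at the infinite $1$-clusters. As $p>\pcs$ there is $\PP_p$-a.s.\ at least one. For non-uniqueness, Theorem~\ref{m1}(b) yields $\pus(G)+\pcs\ge1$, so $\pus(G)\ge1-\pcs>p$; thus $p<\pus(G)$, and by the definition of $\pus(G)$ and the monotonicity of uniqueness for quasi-transitive graphs \cite{LP} there is $\PP_p$-a.s.\ no \emph{unique} infinite $1$-cluster. Since $\PP_p$ is insertion-tolerant and $\Aut(G)$ acts quasi-transitively, the Newman--Schulman dichotomy \cite{LP} forces the number of infinite $1$-clusters to lie $\PP_p$-a.s.\ in $\{0,1,\infty\}$; having excluded $0$ and $1$ it is $\infty$ almost surely. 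The open/closed symmetry then finishes the argument: the closed sites under $\PP_p$ are distributed as the open sites under $\PP_{1-p}$, and $1-p\in(\pcs,1-\pcs)$ as well, so the same reasoning applied at $1-p$ gives infinitely many infinite $0$-clusters under $\PP_p$.

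The only genuine obstacle is the non-uniqueness step, and it is entirely absorbed into Theorem~\ref{m1}(b): the bound $\pus(G)+\pcs(G)\ge1$ together with $\pcs<\frac12$ is exactly what places the whole interval $(\pcs,1-\pcs)$ strictly below $\pus(G)$ (and indeed shows $\pcs<\pus(G)$, so that a non-uniqueness phase exists at all). Everything else is the $0/1/\infty$ law and the $p\leftrightarrow1-p$ symmetry, so once one has checked the hypotheses of Theorem~\ref{m1} (one end) and of Newman--Schulman (insertion tolerance, quasi-transitivity, ergodicity of $\PP_p$), no further difficulty remains.
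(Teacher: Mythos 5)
Your argument follows essentially the same route as the paper: non-amenability via Euler's formula, $\pcs<\frac12$ from \cite[Thm 2]{HP19}, the key inequality $\pus\ge 1-\pcs$ from Theorem \ref{m1}(b), monotonicity of uniqueness (Lemma \ref{lsu}), the $0/1/\infty$ law (Lemma \ref{l32}), and the $p\leftrightarrow 1-p$ symmetry; your treatment of part (b) is the natural dichotomy that the paper leaves implicit. There is, however, one genuine gap: you assert that $G$ is one-ended and dismiss this as ``a routine preliminary''. It is not, and the paper does not claim it. By Theorem \ref{p13}, a quasi-transitive graph has one, two, or infinitely many ends; the two-ended case is excluded here because two ends force amenability while your Gauss--Bonnet computation gives non-amenability, but infinitely many ends are entirely compatible with (indeed imply) non-amenability, and nothing in the hypotheses rules out a planar, quasi-transitive, $2$-connected graph with minimum degree $7$ and infinitely many ends. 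Theorem \ref{m1}(b), the sole source of your inequality $\pus(G)+\pcs(G)\ge 1$, is stated only for one-ended $G$, so your appeal to it is unjustified in the remaining case and the non-uniqueness step collapses there.

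The repair is one line and is exactly what the paper does: if $G$ has infinitely many ends, then $\pus(G)=1$ by Theorem \ref{m3}, so $1-\pcs\le\pus$ holds trivially and the rest of your argument (existence for $p>\pcs$, exclusion of $N=1$ below $\pus$, the $0/1/\infty$ law, and the open/closed symmetry) goes through unchanged. With that case added, your proof coincides with the paper's.
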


\begin{theorem}\label{sc2} 
Consider site percolation on $G\in\sQ$, and assume that $\PP_{\frac12}(N\ge 1)=1$. 
Then, $\PP_{\frac12}$-a.s., there exist infinitely many infinite $1$-clusters and infinitely many infinite $0$-clusters.
\end{theorem}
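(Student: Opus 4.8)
The plan is to prove Theorem~\ref{sc2} by exploiting the self-matching structure at $p=\tfrac12$ together with the general inequality of Theorem~\ref{m1}(b) and a standard insertion/ergodicity argument. First I would observe that site percolation on $G$ at parameter $p$ is \emph{dual} to site percolation on its matching graph $G_*$ at parameter $1-p$, in the sense that the $1$-clusters of the former and the $0$-clusters of the latter are governed by the same underlying configuration; this is the Sykes--Essam matching duality recalled in Section~\ref{ssec:mainres1}. At $p=\tfrac12$ the two parameters coincide, so the $1$-clusters of $G$ and the $0$-clusters (viewed in $G_*$) play symmetric roles. The hypothesis $\PP_{1/2}(N\ge 1)=1$ says that a.s.\ there is at least one infinite $1$-cluster.

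Next I would invoke Theorem~\ref{m1}(b), which gives $\pus(G)+\pcs(G)\ge 1$. The assumption $\PP_{1/2}(N\ge1)=1$ forces $\pcs(G)\le\tfrac12$, since an infinite $1$-cluster exists at $p=\tfrac12$. Combined with $\pus(G)\ge 1-\pcs(G)\ge\tfrac12$, this yields $\pcs(G)\le\tfrac12\le\pus(G)$. The key point is that $\tfrac12$ lies in the closed interval $[\pcs(G),\pus(G)]$, the regime where—by the characterization of the uniqueness phase—there are either zero or infinitely many infinite $1$-clusters. Since the existence hypothesis rules out zero, I would conclude there are $\PP_{1/2}$-a.s.\ infinitely many infinite $1$-clusters, provided $\tfrac12$ is genuinely below $\pus$ and not at the boundary. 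The delicate case is the boundary $p=\pus(G)=\tfrac12$: here I would argue that at $\tfrac12$ the matching symmetry forces $\pcs(G_*)=\tfrac12$ as well (by \eqref{eq:-5} with $p=\tfrac12$), so both $1$-clusters and $0$-clusters sit exactly at their respective critical/uniqueness thresholds, and a dedicated argument is needed rather than a direct appeal to the open uniqueness phase.

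To handle both the infinitude of $1$-clusters and, symmetrically, of $0$-clusters, I would use the standard trichotomy for the number of infinite clusters of an insertion-tolerant, automorphism-invariant percolation (Newman--Schulman, adapted to quasi-transitive $G$): the number is a.s.\ $0$, $1$, or $\infty$. Site percolation with product measure is insertion- and deletion-tolerant, and $\PP_{1/2}$ is invariant and ergodic under the (quasi-transitive) automorphism group, so the trichotomy applies to the infinite $1$-clusters and, by the matching duality at $p=\tfrac12$, equally to the infinite $0$-clusters. The hypothesis excludes $0$; to exclude the value $1$ at $p=\tfrac12$ I would appeal to the matching relation: a \emph{unique} infinite $1$-cluster on $G$ would, through Sykes--Essam duality, preclude an infinite $0$-cluster on the matching side, which at the symmetric point $p=\tfrac12$ contradicts the reflection symmetry that swaps $1$-clusters and $0$-clusters. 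This forces the cluster count to be $\infty$ for both colours.

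The main obstacle, which I would treat most carefully, is precisely this boundary coexistence at $p=\tfrac12$: ruling out the value $1$ (coexistence of a unique infinite $1$-cluster with no infinite $0$-cluster, or vice versa) requires more than the general inequality $\pus(G)+\pcs(G)\ge 1$—it needs the full strength of the matching-pair identity \eqref{eq:-5} applied at the self-dual point, together with the fact that an infinite $0$-cluster and an infinite $1$-cluster cannot both be unique while respecting the topological constraint that in the plane a unique infinite $1$-cluster would separate, and hence destroy, every infinite $0$-cluster. Making this topological exclusion rigorous for quasi-transitive one-ended planar graphs—rather than just for transitive triangulations as in \cite{ZL17}—is where the bulk of the work lies, and I would base it on the dependent-bond reformulation on the dual pair announced in the abstract, which converts the coexistence question into a statement about infinite primal and dual bond clusters that cannot simultaneously exist uniquely.
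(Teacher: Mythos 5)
Your plan for the central case coincides with the paper's own argument: at $p=\tfrac12$ the trichotomy of Lemma \ref{l32}, combined with the $0$--$1$ symmetry and the hypothesis, gives $(N,\ol N)\in\{(1,1),(\infty,\infty)\}$ almost surely (this is \eqref{eq:new20} in the paper), and the value $(1,1)$ is then excluded by the matching-pair/dual-bond machinery (Lemma \ref{l35}, packaged as Corollary \ref{c56}), exactly as in your third paragraph. Your opening detour through Theorem \ref{m1}(b) and the interval $[\pcs(G),\pus(G)]$ is, as you yourself note, inconclusive precisely at the boundary point $\pus(G)=\tfrac12$, so it can be dropped; the step that actually closes the argument is the exclusion of $(1,1)$.

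There is, however, a genuine gap: you treat only one-ended graphs. By Theorem \ref{p13}, a quasi-transitive planar graph may instead have two or infinitely many ends, and the infinitely-ended case is not vacuous for the hypothesis $\PP_{\frac12}(N\ge 1)=1$. For such graphs the duality you rely on is unavailable: Lemma \ref{l35} (and the underlying Lemma \ref{ll25}) requires one-endedness, and your proposed topological fallback --- that a unique infinite $1$-cluster would ``separate, and hence destroy, every infinite $0$-cluster'' --- is not a valid argument; excluding coexistence genuinely requires a Zhang/DRT-type argument in the amenable case or the dual-pair theorem in the non-amenable one-ended case, neither of which applies to multi-ended graphs. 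The paper disposes of the infinitely-ended case by a different and much cheaper route: Theorem \ref{m3} gives $\pus(G)=1$, so $\PP_{\frac12}(N=1)<1$, whence $N\ne 1$ a.s.\ by ergodicity, and the trichotomy forces $N=\infty$; symmetry then gives $\ol N=\infty$. You should also record that the two-ended and amenable one-ended cases are vacuous (by Theorem \ref{m2} and Lemma \ref{l73} respectively), so that the case analysis by number of ends is complete.
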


The approach to establishing Conjectures \ref{c12} and \ref{c11} is to classify $\sQ$ according to
amenability and the number of ends, and then prove these conjectures for each such subclass of graphs. 
We recall the following well-known theorem.

\begin{theorem}[\mbox{\cite{Ho}, \cite[Prop.\ 2.1]{Bab97}}]\label{p13}
A graph $G$ that is  infinite, connected, locally finite, and quasi-transitive has either one or two 
or infinitely many ends. If it has two ends, then it is amenable. If it has infinitely many ends, 
then it is non-amenable.
\end{theorem}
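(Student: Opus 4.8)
The plan is to treat the three assertions separately, writing $e(G)=\sup_F c_\infty(G-F)$ for the number of ends, where $F$ ranges over finite vertex-sets and $c_\infty$ counts the infinite connected components of $G-F$; since $G$ is infinite and connected, $e(G)\ge 1$. Note at the outset that quasi-transitivity together with local finiteness forces a finite maximum degree, so $G$ has bounded degree, a fact I use freely below (in particular amenability will be a quasi-isometry invariant among the graphs in play).

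\emph{Step 1 (the trichotomy).} I would rule out $3\le e(G)<\oo$ by a doubling argument. Suppose $F$ is finite with $c_\infty(G-F)=k$, where $3\le k<\oo$, and let $C_1,\dots,C_k$ be the infinite components of $G-F$. Because $G$ is quasi-transitive and $C_1$ is infinite, there is $\phi\in\Aut(G)$ with $\phi(F)\subseteq C_1$ lying so deep inside $C_1$ that $F$ belongs to the component $\phi(C_1)$ of $G-\phi(F)$. Then $C_2,\dots,C_k$ remain infinite components of $G-(F\cup\phi(F))$ (they meet neither $F$ nor $\phi(F)\subseteq C_1$), while $\phi(C_2),\dots,\phi(C_k)\subseteq C_1$ are also infinite components of $G-(F\cup\phi(F))$ (each is separated from $F$ by $\phi(F)$). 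Hence $c_\infty(G-(F\cup\phi(F)))\ge 2(k-1)>k$, contradicting the maximality of $k$. Therefore $e(G)\in\{1,2,\oo\}$.

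\emph{Step 2 (two ends $\Rightarrow$ amenable).} Suppose $e(G)=2$ and fix finite $S$ with $G-S=A\sqcup B$, both $A,B$ infinite. Quasi-transitivity supplies a translation $\tau\in\Aut(G)$ pushing $S$ into $B$, so that $S,\tau S,\tau^2S,\dots$ march toward the $B$-end. Removing $S\cup\tau S$ leaves three regions — the $A$-side, a middle slab $\Sigma$, and the far side — of which at most two can be infinite; since the outer two carry the two ends, $\Sigma$ is finite. The slab $\Sigma_k$ between $\tau^kS$ and $\tau^{k+1}S$ is a $\tau$-translate of $\Sigma_0$, so $|\Sigma_k|=m<\oo$ for every $k$. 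Then $F_n=\bigcup_{k<n}\Sigma_k$ satisfies $|F_n|\asymp nm$ while its boundary lies in $S\cup\tau^nS$ and so has at most $2|S|$ vertices; thus $|\partial F_n|/|F_n|\to 0$ and $\{F_n\}$ is an amenable exhaustion, whence $G$ is amenable.

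\emph{Step 3 (infinitely many ends $\Rightarrow$ non-amenable).} Iterating Step 1 from a separator $F$ with $c_\infty(G-F)\ge 3$ produces, inside each outer component, a translate of $F$ that again branches into at least two further infinite components; the resulting nested family of uniformly bounded separators is organised, by the structure-tree theory of Dunwoody and Thomassen--Woess, into a tree $T$ on which $\Aut(G)$ acts and to which $G$ is quasi-isometric, with $T$ of minimal degree $\ge 3$. Such a tree has strictly positive Cheeger constant $\inf_S|\partial S|/|S|$, and since amenability is a quasi-isometry invariant among bounded-degree graphs, $G$ is non-amenable. The main obstacle is precisely this step: exponential branching alone does \emph{not} imply non-amenability, so the genuine work is to extract canonical, $\Aut(G)$-invariant, uniformly bounded cuts whose nesting tree is truly quasi-isometric to $G$, and to transfer the Cheeger bound back across that quasi-isometry. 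Steps 1 and 2 are elementary; essentially all the content of the cited results of Hopf and Babai resides in Step 3.
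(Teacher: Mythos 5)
The paper offers no proof of Theorem \ref{p13} at all: it is quoted verbatim from Hopf and from Babai's Proposition 2.1, so your attempt has to stand on its own. Your Step 1 is the standard doubling argument and is sound, provided you first replace $F$ by a finite \emph{connected} superset (which cannot decrease the number of infinite components) and drop the unjustified assertion that $F$ lands specifically in $\phi(C_1)$: it lands in some infinite component $\phi(C_j)$ of $G-\phi(F)$, and since $C_i\cup F$ is connected for each $i$, every $\phi(C_i)$ with $i\ne j$ is contained in $C_1$, so the count $2(k-1)>k$ survives relabelling. Step 2, however, has a genuine (if repairable) gap: an automorphism $\tau$ with $\tau S\subseteq B$ need not generate a marching sequence $\tau^kS$. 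On $G=\ZZ$ with $S=\{0\}$ and $B=\ZZ_{>0}$, the automorphism $\tau(x)=-x+10$ sends $S$ deep into $B$ yet satisfies $\tau^2=\mathrm{id}$. In general $\tau$ may swap the two ends, and one must first manufacture a genuine translation, e.g.\ by checking whether the component of $G-\tau S$ containing $S$ is $\tau A$ or $\tau B$ and, in the end-swapping case, composing two such automorphisms chosen so that the composite still displaces $S$ into $B$. Without this, the slabs $\Sigma_k$ and the sets $F_n$ witnessing amenability are simply not defined.

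The serious problem is Step 3. Your plan is to show that $G$ is quasi-isometric to a tree of minimal degree at least $3$ and then pull back the Cheeger bound through the quasi-isometry. That intermediate claim is false: the Cayley graph of $\ZZ^2\ast\ZZ$ is transitive, locally finite, and has infinitely many ends, but the group contains $\ZZ^2$ and hence is not virtually free, so its Cayley graph is not quasi-isometric to any tree (it is not even hyperbolic). The Dunwoody/Thomassen--Woess structure tree is an auxiliary object on which $\Aut(G)$ acts with finite edge stabilizers; the blocks of the associated decomposition may themselves be infinite, so $G$ is in general very far from being quasi-isometric to that tree, and the proposed transfer of non-amenability collapses. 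A correct argument must extract the isoperimetric inequality from the $\Aut(G)$-invariant nested family of uniformly finite cuts itself --- for instance by using the branching of the cut system to build a bounded-displacement map $V\to V$ all of whose fibres have at least two elements, or, in the Cayley-graph case, by producing a free subgroup via ping-pong on the structure tree. You rightly observe that essentially all the content of the cited results lives in this step, but the specific route you propose would not close it.
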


Let $G\in\sQ$. By Theorem \ref{p13},
only the following cases may occur.
\begin{romlist}
\item $G$ is amenable and one-ended. This case includes the square lattice, for which percolation has been studied extensively; see, for example, \cite{GP,K82}.
\item $G$ is non-amenable and one-ended. It is proved in \cite{bs00} that $\pcs <\pus $ and $\pcb <\pub $ for this case.
\item $G$ has two ends, in which case there is no percolation phase transition of interest.
\item $G$ has infinitely many ends.
\end{romlist}
We shall study percolation on each class of graphs listed above. 
Matching graphs and dual graphs  will play important roles in our analysis.

\subsection{Organization of material}\label{ssec:summary}

 Section \ref{sec:notation} is devoted to basic notation for graphs and percolation. 
In Section \ref{sec:bk}, we review certain known results that will be used to prove the main results of 
Section \ref{ssec:mainres1}.   It is explained in Section \ref{sec:sbond} how a site percolation process on
a planar graph may be expressed in terms of a dependent bond process on the same graph; this allows a connection
between site percolation on the matching graph and bond percolation on the dual graph. 
We prove Theorem \ref{m1}(a) for amenable graphs in Section \ref{s7},
and for non-amenable graphs in Section \ref{ss5}.  
Theorem \ref{sc1} is proved in Section \ref{psc1}, and 
Theorem \ref{sc2} in Section \ref{psc2}. 

\section{Notation}\label{sec:notation}

\subsection{Graphical notation}\label{sec:def}
Let $\Aut(G)$ be the automorphism group of the graph $G=(V,E)$. A graph $G$ is called \emph{vertex-transitive}, or simply \emph{transitive}, if all the vertices lie in the same orbit under the action of $\Aut(G)$. The graph $G$ is called \emph{quasi-transitive} if the action of $\Aut (G)$ on $V$ has only finitely many orbits. It is called \emph{locally finite} if all vertex-degrees are finite.
An edge with endpoints $u$, $v$ is denoted $\langle u,v\rangle$, in which case
we call $u$ and $v$ \emph{adjacent} and
we write $u\sim v$.
The graph-distance $d_G(u,v)$ between vertices $u$, $v$ is the minimal number of edges in
a path from $u$ to $v$. 

A graph $G$ is \emph{planar} if it can be embedded in the plane $\RR^2$  in such a way that its edges intersect only at their endpoints; a planar embedding of such $G$ is called a \emph{plane} graph.  
A \emph{face} of a plane graph $G$ is an (arc-)connected 
component of the complement $\RR^2\sm G$. Note that faces are 
open sets, and may be either bounded or unbounded. 
With a face $F$, we associate the set of vertices and edges in its boundary.
The \emph{size} of a face is the number of edges in its boundary.
While it may be helpful to
think of a face as being bounded by a cycle of $G$, the reality can be more complicated
in that faces are not invariably simply connected (if $G$ is disconnected) and their boundaries 
are not generally self-avoiding cycles or paths (if $G$ is not $2$-connected).
A plane graph $G$ is called a \emph{triangulation} it every face is bounded by a $3$-cycle.

A manifold $M$ is called \emph{plane} if it is a surface and, 
for every self-avoiding cycle $\pi$ of $M$,
$M\sm\pi$ has exactly two connected components. When a graph is drawn in a plane manifold $M$, the terms embedding and face mean the same as when embedded in the Euclidean plane.
We say that an embedded graph $G\subset M$  is \emph{properly embedded} if every compact subset of $M$ contains only finitely many vertices of $G$ and intersects only finitely many edges. 
Henceforth, all embeddings will be assumed to be proper.
The term \emph{plane}
shall mean either the Euclidean plane or the hyperbolic plane, and each may be denoted $\sH$ when appropriate.

A \emph{cycle} (or \emph{$n$-cycle}) 
$C$ of a simple graph $G=(V,E)$ is a sequence $v_0,v_1,\dots,  v_{n+1}=v_0$ of vertices $v_i$ such that $n \ge 3$, 
$e_i:=\langle v_i,v_{i+1}\rangle$ satisfies
$e_i\in E$ for $i=0,1,\dots,n$, and $v_0,v_1,\dots,v_n$ are distinct.
Let $G$ be a plane graph, properly embedded in $\sH$.
In this case we write $C^\circ$ for the bounded component of $\RR^2\sm C$,
and $\ol C$ for the closure of $C^\circ$.
The \lq matching graph' $G_*$ is obtained from $G$ by adding all possible diagonals to every face of $G$. That is, let $F$ be such a face, and let $\pd F$ be the set of vertices lying in 
the boundary of $F$. We augment $G$ by adding edges between any distinct pair 
$x, y\in V$ such that (i)  there exists a face $F$ such that $x,y \in\pd F$ and
(ii) $\langle x,y\rangle \notin E$.  We write $D$ for the set of diagonals, so that
$G_*=(V,E\cup D)$.
We recall from \cite[Thm 3]{Kr} (see Remark \ref{rem:emb}(d)) that, for a $2$-connected graph $G$,
every face is bounded by either a cycle or a doubly-infinite path.

Next we define a matching pair. Let $M\in\sQ$
be one-ended (we follow the earlier literature by calling $M$ a \emph{mosaic} in this context).
By the forthcoming Remark \ref{rem:emb}(d), $M$ has an
embedding in the plane such that the dual graph $M^+$ and the matching graph $M_*$ are quasi-transitive,
and furthermore every face of $M$ is bounded by a cycle. Let $\sF_4=\sF_4(M)$ be the set of faces of $M$
bounded by $n$-cycles with $n\ge 4$,
and let $\sF_4=F_1\cup F_2$ be a partition of $\sF_4$. The graph $G_i$ is obtained from $M$ by adding 
all diagonals to all faces in $F_i$, and we assume that $\Aut(M)$ has some subgroup $\Ga$ that acts
quasi-transitively on each $G_i$. The pair $(G_1,G_2)$ is
said to be a \emph{matching pair} derived from $M$. 

The graph $G$ is called \emph{amenable} if its Cheeger constant satisfies
\begin{equation}
\inf_{K\subseteq V,\, |K|<\infty}\frac{|\De K|}{|K|}=0,\label{am}
\end{equation}
where $\De K$ is the subset of $E$ containing edges with exactly one endpoint in $K$. If the left side of \eqref{am} is strictly positive, the graph $G$ is called \emph{non-amenable}.

Each $G\in\sT$  is quasi-isometric with one and only one of the following spaces: $\ZZ$, the 3-regular tree, the Euclidean plane, and the hyperbolic plane; see \cite{Bab97,bs00}.
 See \cite{CFKP,Iver} for background on hyperbolic geometry.

Recall that the number of ends of a connected graph is the supremum over its finite subgraphs $F$ of the number of
infinite components that remain after removing $F$, and recall Theorem \ref{p13}. 
The number of ends of a graph is highly relevant to properties of statistical mechanical models on the graph; see \cite{GrL,ZLSAW}, for example, for discussions of the relevance of the number of ends to the number and speed of self-avoiding walks.

\subsection{Percolation notation}\label{ssec:percnot}
 Let $G=(V,E)$ be a connected, simple graph with bounded vertex-degrees. 
 A \emph{site percolation} configuration on $G$ is an assignment $\omega\in \Om_V:=\{0,1\}^{V}$ to each vertex of either state 0 or state 1. A cluster in $\omega$ is a maximal connected set of vertices in which each vertex has the same state. A cluster may be a 0-cluster or a 1-cluster depending on the common state of its vertices, and it may be finite or infinite. We say that \lq percolation (or $1$-percolation) occurs' in $\omega$ 
 if there exists an infinite $1$-cluster in $\omega$. For $\om\in\Om_V$, we write $1-\om$ for the
configuration with open/closed inverted.

A \emph{bond percolation} configuration $\om\in \Om_E:=\{0,1\}^{E}$ is an assignment to each edge in $G$ of either state 0 or state 1.   A bond percolation model may be considered as a site percolation model on the so-called \emph{covering graph} (or \emph{line graph}) $\wtilde G$ of $G$. Therefore, we may use the term 1-cluster (\resp, 0-cluster) for a maximal connected set of edges with state $1$ (\resp, state $0$) in a bond configuration. The \emph{size} of a cluster in site/bond percolation is the number of its vertices.

We call a vertex or an edge \emph{open} if it has state 1, and \emph{closed} otherwise. Let $\mu$ be a probability measure on $\Om_V$ endowed with the product $\sigma$-field. The corresponding site model is the probability space  $(\Om_V,\mu)$, with a similar definition for a bond model $(\Om_E,\mu)$. 
The central questions in percolation theory concern the existence and multiplicity of infinite clusters viewed as functions of $\mu$. 

A percolation model $(\Om,\mu)$ is called \emph{invariant} if $\mu$ is invariant under the action of $\Aut(G)$. 
An invariant measure is called \emph{ergodic} if there exists an automorphism subgroup $\Ga$ acting quasi-transitively on $G$ such that 
$\mu(A)\in\{0,1\}$ for any $\Ga$-invariant event $A$. See, for example, \cite[Prop.\ 7.3]{LP}. It is standard that the product measure $\PP_p$ is ergodic if $G$ is infinite and quasi-transitive.

Consider percolation on a graph $G=(V,E)$.
A site or bond configuration $\om$ induces open and closed subgraphs of $G$ 
 in the usual way, and we write $N$ ($=N_G(\om)$)
 for the number of infinite $1$-clusters, 
 and $\ol N$  ($=\ol N_G(\om)$) for the number of infinite $0$-clusters. For site percolation on a graph $G$, 
 we write  $N_*$, $\ol N_*$ for the corresponding quantities on the matching graph $G_*$. 
 A configuration is in one--one
correspondence with the set of elements (vertices or edges, as appropriate) that are open in the configuration.

Let $p\in [0,1]$. We endow $\Om_V$ with the product measure $\PP_p$ with density $p$.
For $v\in V$, let $\theta_v(p)$ be the probability that $v$ lies in an infinite open cluster.
It is standard that there exists $\pcs(G)\in(0,1]$ such that
$$
\text{for } v\in V, \qq \theta_v(p) \begin{cases} = 0 &\text{if } p<\pcs(G),\\
>0 &\text{if } p > \pcs(G),
\end{cases}
$$ 
and $\pcs(G)$ is called the \emph{(site) critical probability} of $G$.

More generally, consider (either bond or site) percolation on a graph $G$ with probability measure $\PP_p$.
The corresponding critical points may be expressed as follows:
\begin{align*}
\pcs (G)&:=\inf\{p\in[0,1]: \PP_p(N\ge 1)=1 \text{ for site percolation} \},\\
\pcb (G)&:=\inf\{p\in[0,1]: \PP_p(N\ge 1)=1 \text{ for bond percolation}\},
\end{align*}
and
\begin{align*}
\pus (G)&:=\inf\{p\in[0,1]: \PP_p(N=1)=1 \text{ for site percolation} \},\\
\pub (G)&:=\inf\{p\in[0,1]: \PP_p(N=1)=1 \text{ for bond percolation}\}.
\end{align*}
By the Kolmogorov zero--one law, $\PP_p(N\ge 1)$ equals either $0$ or $1$.

The notation $\pc$ (\resp, $\pu$) shall always mean the critical probability $\pcs$ (\resp, $\pus$) of the site model.
For background and notation concerning percolation theory, the reader is referred to the book \cite{GP}.

\section{Background}\label{sec:bk}
We review certain known results that will be used in the proofs of our main results.

\subsection{Embeddings of one-ended planar graphs}\label{sec:emb}

We say that the $2$-sphere, the Euclidean plane,  and the hyperbolic plane  constitute the \emph{natural geometries} (see,
for example, Babai \cite[Sect.\ 3.1]{Bab97}).
 The natural geometries are two-dimensional Riemannian manifolds. 
 An \emph{Archimedean tiling} of a two-dimensional Riemannian
manifold is a tiling by regular polygons such
that the group of isometries of the tiling acts transitively
on the vertices of the tiling.
An infinite, one-ended, transitive planar
graph can be characterized as a tiling of either the Euclidean plane or the hyperbolic plane. 

An \emph{embedding} of a graph $G=(V,E)$ (with underlying $1$-complex denoted $\vert G\vert$)
in a surface $M$ is a continuous map $\phi: \vert G\vert \to M$ such that the induced map $\vert G\vert
\to \phi(\vert G\vert)$ is a homeomorphism. An embedding $\phi$ is called \emph{cellular} if 
$M\setminus \phi(G)$ is a disjoint union of spaces 
homeomorphic to an open disc. (See \cite{Moh} and \cite[Sect.\ 3.2]{MT}.)

We shall consider embeddings of planar graphs in either the Euclidean or hyperbolic plane, and we use the notation
$\sH$ to denote either of these, as appropriate for the context.

\begin{theorem}\label{p21}\mbox{\hfil}
\begin{letlist}
\item \cite[Thms 3.1,  4.2]{Bab97}
If $G\in\sT$ is one-ended, then $G$ may be embedded in $\sH$
as an Archimedean tiling, and all automorphisms of $G$ extend to isometries of $\sH$.
If $G\in\sQ$ is one-ended and  $3$-connected, then $G$ may be embedded in $\sH$
such that all automorphisms of $G$ extend to isometries of $\sH$. 

\item \cite[p.\ 42]{Moh} Let $G$ be a $3$-connected graph, cellularly embedded in $\sH$ 
such that all faces are of finite size.
Then $G$ is uniquely embeddable in the sense that for any two cellular embeddings $\phi_1: G\to S_1$,
$\phi_2:G\to S_2$ into planar surfaces $S_1$, $S_2$, there is a homeomorphism $\tau:S_1\to S_2$ such that
$\phi_2=\tau\phi_1$.  

\item \cite[Thm 8.25 and proof, pp.\ 288, 298]{LP} 
If $G=(V,E)\in\sQ$ is one-ended, there exists some embedding of $G$ in $\sH$
such that the edges coincide with geodesics, the dual 
graph $G^+$ is quasi-transitive, and all automorphisms of $G$ extend to isometries of $\sH$. 
Such an embedding is called \emph{canonical}.

\item \cite{ST97} The automorphism group $\Aut(G)$ of a quasi-transitive graph $G$ with quadratic growth
contains a subgroup isomorphic to $\ZZ^2$ that acts quasi-transitively on $G$.
\end{letlist}
\end{theorem}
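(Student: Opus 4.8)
The plan is to build all four parts on two pillars: the combinatorial rigidity of embeddings of highly-connected planar graphs (a Whitney--Mohar phenomenon), and the quasi-isometric classification of quasi-transitive graphs into the four model geometries $\ZZ$, the regular tree, the Euclidean plane, and the hyperbolic plane. Parts (a)--(c) all express the single idea that a sufficiently connected, one-ended, quasi-transitive planar graph carries a geometrically rigid embedding in which combinatorial symmetries are forced to become isometries; part (d) is a logically separate, purely group-theoretic input about quadratic growth, and I would treat it last.

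For part (b), I would first show that the family of facial boundary walks of a $3$-connected graph admitting a cellular embedding with all faces of finite size is determined by the abstract graph alone, independently of the embedding (the facial cycles being precisely the induced non-separating cycles, in the spirit of Tutte and Whitney). Given two cellular embeddings $\phi_1\colon G\to S_1$ and $\phi_2\colon G\to S_2$, they then share the same family of facial cycles, so I would build the homeomorphism $\tau$ piecewise: take $\tau$ to agree with $\phi_2\phi_1^{-1}$ on the image of $G$, and extend across each face. Here cellularity plus finiteness of faces guarantees that every face is an open disc, so the Jordan--Schoenflies theorem lets one extend the boundary map over each disc and glue, yielding $\phi_2=\tau\phi_1$. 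The step needing care is the combinatorial characterization of facial cycles, since without $3$-connectivity it fails.

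For part (a), one-endedness rules out the two-ended geometry $\ZZ$ and the infinitely-ended tree, so the classification places $G$ quasi-isometric to either the Euclidean or the hyperbolic plane, which I then fix as $\sH$. For transitive $G$ I would realize $G$ as a tiling of $\sH$ by a discrete cocompact isometry group and identify it with an Archimedean tiling; then I would invoke the rigidity of (b) to conclude that any graph automorphism $g$ induces a self-homeomorphism of $\sH$ preserving the tiling, which can be straightened to an isometry. The $3$-connected quasi-transitive case is the same argument with transitivity of the isometric action weakened to quasi-transitivity. For part (c), the extra requirements are that edges be geodesics and that the dual $G^+$ be quasi-transitive; I would start from the isometric embedding of (a), replace each edge-arc by the geodesic joining its endpoints (verifying that properness and planarity survive), and then select the embedding within its homeomorphism class so that $\Aut(G)$ has finitely many orbits on faces as well as on vertices, which is exactly what makes $G^+$ quasi-transitive.

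Part (d) I would handle independently: for a quasi-transitive graph of quadratic, hence polynomial, growth, Gromov's polynomial-growth theorem combined with the Trofimov structure theory shows that the relevant quotient of $\Aut(G)$ is virtually $\ZZ^2$, from which one extracts a subgroup isomorphic to $\ZZ^2$ acting quasi-transitively. I expect the main obstacle to lie in the quasi-transitive, connectivity-$2$ regime underlying (c): there the uniqueness of the embedding genuinely fails (as the paper itself emphasizes), so the rigidity of (b) cannot be applied directly. Instead one must construct a specific canonical embedding and verify by hand that both $G$ and $G^+$ inherit quasi-transitivity from a common subgroup $\Ga\le\Aut(G)$, which is the delicate point separating the transitive from the merely quasi-transitive case.
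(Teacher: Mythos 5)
The paper offers no proof of Theorem \ref{p21}: it is a compilation of results quoted from the literature (Babai \cite{Bab97} for (a), Mohar \cite{Moh} for (b), Lyons--Peres \cite{LP} for (c), Seifter--Trofimov \cite{ST97} for (d)), and the authors' ``proof'' consists of the citations themselves. Your proposal is therefore doing something the paper does not attempt, namely reconstructing the arguments. Judged on those terms, your sketch of (b) is essentially the standard Whitney--Mohar argument (facial cycles of a $3$-connected planar graph are the induced non-separating cycles, then extend over faces by Jordan--Schoenflies), and your route to (d) via polynomial growth and Trofimov's structure theory is the right neighbourhood, though the cited result of \cite{ST97} is proved there by more elementary means specific to quadratic growth.

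The genuine gap is in part (a). You write that you would ``realize $G$ as a tiling of $\sH$ by a discrete cocompact isometry group'' and then straighten automorphisms to isometries using the rigidity of (b) --- but that realization \emph{is} the theorem. Quasi-isometry of $G$ with the Euclidean or hyperbolic plane gives no embedding at all, let alone one on which $\Aut(G)$ acts isometrically; and the rigidity of (b) only produces a self-homeomorphism of $\sH$ from each automorphism, not an isometry. The missing ingredient is the theory of planar discontinuous groups: one must first show that $\Aut(G)$ preserves a combinatorial planar structure (a rotation system, which exists by $3$-connectivity and uniqueness of the embedding), deduce that $\Aut(G)$ is a planar discontinuous group acting with compact quotient, and then invoke the classification of such groups as NEC/Fuchsian groups acting by isometries on $\SS^2$, $\RR^2$, or the hyperbolic plane. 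Without this step the claim that automorphisms extend to isometries does not follow from anything you have established. A secondary, smaller issue is part (c) in the connectivity-$2$ case: you correctly identify that (b) cannot be applied there and that a canonical embedding must be built by hand, but you do not supply the construction; this is precisely what the proof of \cite[Thm 8.25]{LP} provides, and it is the part of (c) that the present paper actually relies on for its matching-pair arguments.
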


\begin{remark}\label{rem:emb}
Some known facts concerning embeddings follow.
\begin{letlist}
\item \cite[Props 2.2, 2.2]{BIW} All one-ended, transitive, planar graphs are $3$-connected,
and all proper embeddings of a one-ended, quasi-transitive, planar graph have only finite faces.
\item 
By Theorem \ref{p21}(b), any one-ended $G\in\sT$  
has a unique proper cellular embedding in $\sH$ up to homeomorphism.
Hence, the matching and dual graphs of $G$ are independent of the embedding.
\item  
The conclusion of part (b) holds for any one-ended, $3$-connected $G\in \sQ$. 
\item For a one-ended  $G\in\sQ$,  we fix
a canonical embedding (in the sense of Theorem \ref{p21}(c)). With this  given, 
the dual graph $G^+$ and the matching graph $G_*$ are quasi-transitive, 
and furthermore (by \cite[Thm 3]{Kr})  the boundary 
of every face is a cycle of $G$.
\end{letlist}
\end{remark}

\begin{remark}[Proper embedding]\label{rem:arch}
Theorem \ref{p21}(a) implies in particular that every one-ended $G\in\sT$ 
may be properly embedded in its natural geometry. Such an embedding
is called \emph{topologically locally finite} (TLF) by Renault \cite[Prop.\ 5.1]{DN}, \cite{R04}. 
For a related discussion in the case of non-amenable graphs, see \cite[Prop.\ 2.1]{bs00}.
\end{remark}

\begin{remark}[Connectivity]\label{rem:2con}
Graphs with connectivity $1$ have been excluded from membership of $\sG$ 
(and therefore from $\sT$ and $\sQ$ also).
Percolation on such graphs has little interest since any finite dangling ends may be removed 
without changing the existence of an infinite cluster. 
Moreover, let $F$ be a face of a mosaic $M$, such that $F$ contains some dangling end $D$. 
If $(G_1,G_2)$ is a matching pair derived from $M$,  the critical values $\pc(G_i)$ are unchanged
if $D$ is deleted.
\end{remark}

The representation of transitive, planar
graphs as tilings of natural geometries enables the development of universal techniques to study statistical mechanical models on all such graphs; see, for example, the study \cite{GrL} of a universal lower bound for connective constants on infinite, connected, transitive, planar, cubic graphs.

\subsection{Percolation}
We assume throughout this subsection that the graph $G$ is infinite, connected, and locally finite.

\begin{lemma}[\mbox{\cite[Cor.\ 1.2]{SR99}, \cite{HP}}]\label{lsu}
Let $G$ be quasi-transitive, and consider either site or bond percolation on $G$. Let $0<p_1<p_2\leq 1$, and assume that $\PP_{p_1}(N=1)=1$. Then $\PP_{p_2}(N=1)=1$.
\end{lemma}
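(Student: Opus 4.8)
The plan is to couple all densities simultaneously and reduce the monotonicity of uniqueness to the single geometric fact that, in the coupling, an infinite cluster at the higher density always already contains an infinite cluster at the lower density. Attach to each element $x$ (a vertex for site percolation, an edge for bond percolation) an independent Uniform$[0,1]$ variable $U_x$, and for $p\in[0,1]$ let $\om_p$ be the configuration declaring $x$ open precisely when $U_x\le p$; then $\om_{p_1}\le\om_{p_2}$ pointwise, and the coupling law $\PP$ is invariant and ergodic under a subgroup $\Ga\le\Aut(G)$ acting quasi-transitively. Since $N=1$ at density $p_1$ produces almost surely an infinite $p_1$-cluster, and $\om_{p_1}\le\om_{p_2}$, there is at least one infinite $p_2$-cluster, so it remains only to bound their number above by $1$.

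The crux is the following assertion: \emph{$\PP$-almost surely, every infinite cluster of $\om_{p_2}$ contains an infinite cluster of $\om_{p_1}$.} Granting this, the lemma is immediate: distinct infinite $p_2$-clusters are vertex-disjoint, so they contain distinct infinite $p_1$-clusters, yet $\PP_{p_1}(N=1)=1$ supplies exactly one infinite $p_1$-cluster; hence there is at most one infinite $p_2$-cluster, and combined with the previous paragraph exactly one, giving $\PP_{p_2}(N=1)=1$. This assertion is also exactly what must be proved: were there two infinite $p_2$-clusters, the one not containing the unique infinite $p_1$-cluster would be disjoint from it and would therefore have only finite $p_1$-subclusters, i.e.\ it would be \emph{born} entirely from the elements opened between densities $p_1$ and $p_2$.

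Proving this assertion is the main obstacle, and I would derive it from cluster indistinguishability. View the pair $(\om_{p_2},(U_x)_x)$ as a single $\Ga$-invariant, insertion-tolerant percolation carrying real-valued marks; the event that a given infinite $p_2$-cluster contains an infinite $p_1$-cluster is an automorphism-invariant cluster property measurable with respect to this enriched process. By the Lyons--Schramm indistinguishability theorem---applicable on unimodular quasi-transitive graphs, which include the planar graphs relevant here---either all infinite $p_2$-clusters enjoy this property or none do. Because the unique infinite $p_1$-cluster lies inside some infinite $p_2$-cluster, at least one infinite $p_2$-cluster has the property, so all of them do, which is exactly the assertion. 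The delicate points, handled in the cited works of Schonmann and of H\"aggstr\"om--Peres, are the verification of insertion tolerance and measurability for the marked process and the role of unimodularity; the reductions of the first two paragraphs are routine by comparison, and all the genuine difficulty is concentrated in this indistinguishability step.
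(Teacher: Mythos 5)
The paper offers no proof of this lemma --- it is quoted from Schonmann \cite{SR99} and H\"aggstr\"om--Peres \cite{HP} --- so your proposal must be measured against those sources. Your reduction is exactly the content of the cited theorems: in the standard monotone coupling, almost surely every infinite $p_2$-cluster contains an infinite $p_1$-cluster, whence the number of infinite clusters is non-increasing in $p$ beyond $\pcs$. The route you propose for this key containment --- cluster indistinguishability applied to the $p_2$-process decorated with the i.i.d.\ uniform marks --- is a legitimate and known derivation (it is essentially how Lyons and Schramm rederive the H\"aggstr\"om--Peres theorem from indistinguishability with scenery), and the measurability and insertion-tolerance points you flag are indeed routine for Bernoulli percolation with i.i.d.\ labels.

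The genuine gap is unimodularity. The Lyons--Schramm indistinguishability theorem requires the quasi-transitive graph to be unimodular; this hypothesis appears nowhere in the lemma, which is asserted for arbitrary quasi-transitive $G$, and your parenthetical claim that ``the planar graphs relevant here'' are unimodular is not justified. It is true for one-ended $G\in\sQ$ (by Theorem \ref{p21} the automorphisms extend to isometries of $\sH$, so vertex-stabilizers are finite and unimodularity follows), but the paper also invokes Lemma \ref{lsu} for non-amenable quasi-transitive planar graphs with infinitely many ends in the proof of Theorem \ref{sc1}, where unimodularity is neither assumed nor automatic. Closing exactly this gap is the point of Schonmann's paper: his Theorem 1.1 and Corollary 1.2 prove the containment assertion and the monotonicity of uniqueness for \emph{all} quasi-transitive graphs by a direct argument that avoids both the mass-transport principle (used in \cite{HP} for Cayley graphs) and indistinguishability. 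As written, your proof therefore establishes the lemma only under an extra hypothesis; to obtain the stated generality you must either verify unimodularity in every case where the lemma is applied or replace the indistinguishability step by Schonmann's argument. A secondary slip: you defer the verification of the hypotheses of the indistinguishability step to Schonmann and to H\"aggstr\"om--Peres, but neither of those papers uses indistinguishability at all; the relevant reference for the scenery version of that theorem is Lyons--Schramm.
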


\begin{definition}
Let $G=(V,E)$ be a graph. Given  $\om\in \Om_V$ and a vertex $v\in V$, write $\Pi_{v}\om=\om\cup\{v\}$ (which is to say that $v$ is declared open). For $A\subseteq \Om_V$, we write $\Pi_v A=\{\Pi_v \om: \om\in A\}$. A site percolation process $(\Om_V,\mu)$ on $G$ is called \emph{insertion-tolerant} if $\mu(\Pi_vA)>0$ for every $v\in V$ and every event $A\subseteq \Om_V$ satisfying $\mu(A)>0$.

 A site percolation is called \emph{deletion-tolerant} if $\mu(\Pi_{\neg v}A)>0$ whenever $v\in V$ and $\mu(A)>0$, where $\Pi_{\neg v}\om=\om\setminus\{v\}$ for $\om\in \Om_V$, and $\Pi_{\neg v}A= \{\Pi_{\neg v} \om: \om\in A\}$.
\end{definition}

Similar definitions apply to bond percolation.
We shall encounter weaker definitions in Section \ref{sec:weaker}.

\begin{lemma}[\mbox{\cite[Thm 7.8]{LP}, \cite[Thm 8.1]{BLPS2}}]\label{l32}
Let $G=(V,E)$ be a connected, locally finite,  quasi-transitive  graph, and let
$(\Om,\mu)$ be an invariant (site or bond) percolation on $G$. 
Assume either or both of the following two conditions hold:
\begin{letlist}
    \item $(\Om,\mu)$ is insertion-tolerant,
    \item $G$ is a non-amenable planar graph with one end.
\end{letlist}
Then $\mu(N\in\{0,1,\oo\})=1$.
If $\mu$ is ergodic, $N$ is $\mu$-a.s.\ constant.
\end{lemma}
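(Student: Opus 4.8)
The plan is to separate the two assertions. The final sentence is immediate: the number $N$ of infinite $1$-clusters is invariant under $\Aut(G)$, since any automorphism carries infinite clusters bijectively to infinite clusters; as $\mu$ is invariant, $N$ is an invariant random variable, and for each $k$ the event $\{N=k\}$ is $\Ga$-invariant. If $\mu$ is ergodic, such events have $\mu$-probability $0$ or $1$, so $N$ is $\mu$-a.s.\ constant. It then remains to show $\mu(N\in\{0,1,\oo\})=1$. By a standard ergodic-decomposition argument, under which hypothesis (a), when assumed, is inherited by $\nu$-almost every ergodic component while hypothesis (b) depends only on $G$, it suffices to treat the ergodic case; thus I may assume $N$ equals an a.s.\ constant $k$, and the task reduces to deriving a contradiction from $2\le k<\oo$.

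Under hypothesis (a) I would run the merging argument of Newman and Schulman. Since the $k$ infinite clusters are $\mu$-a.s.\ nonempty, for a finite set $W\subseteq V$ let $E_W$ be the event that $W$ meets every infinite $1$-cluster; then $\mu(E_W)\to 1$ along any exhaustion $W\uparrow V$, by dominated convergence, so $\mu(E_W)>0$ for some finite $W$. On $E_W$ there are vertices $x,y\in W$ lying in two distinct infinite clusters; partitioning $E_W$ according to the finitely many pairs $(x,y)\in W^2$ together with a fixed finite path $P$ joining them in $G$, I obtain a positive-probability sub-event on which $x$, $y$, $P$ are fixed. Declaring all vertices of $P$ open, which preserves positive probability by finitely many applications of insertion-tolerance, merges the two infinite clusters through $x$ and $y$ into one, so the modified configurations satisfy $N\le k-1$ with positive probability. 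This contradicts $\mu(N=k)=1$, whence $N\in\{0,1,\oo\}$.

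Under hypothesis (b) insertion-tolerance is unavailable, so no local modification is permitted, and the argument must instead exploit the planar, one-ended, non-amenable geometry together with the mass-transport principle (available since such $G$ is unimodular and quasi-transitive). The idea is that a configuration with exactly $k$ infinite clusters, $2\le k<\oo$, forces a $\Ga$-invariant family of bi-infinite interfaces separating the clusters in the plane; one-endedness tightly constrains the topology of these interfaces, and transporting a unit of mass across the locally finite interface structure yields, via the mass-transport principle, an invariant quantity whose finiteness clashes with the isoperimetric bound supplied by non-amenability. I expect this case to be the main obstacle: lacking any merging move, the contradiction has to be extracted purely from the interplay of invariance, planar topology, and the non-amenability bound, and genuine care is needed to define the interface system and the transported mass so that they are measurable and authentically $\Ga$-invariant.
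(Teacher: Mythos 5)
The paper does not prove this lemma from first principles: it is quoted from the literature, with part (a) cited to \cite[Thm 7.8]{LP} (whose proof for transitive graphs is noted to carry over to quasi-transitive ones) and part (b) to \cite[Thm 8.1]{BLPS2}. Your handling of the ergodicity statement and of case (a) matches the cited argument: the Newman--Schulman merging step you describe is exactly the proof of \cite[Thm 7.8]{LP}, and it is correct as you state it, granted the standard (but not entirely free) fact that insertion tolerance is inherited by $\nu$-almost every ergodic component; that inheritance is itself a small lemma and deserves a reference or a proof rather than a parenthetical assertion inside the reduction.

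Case (b), however, contains a genuine gap. What you offer there is a description of a strategy --- invariant interfaces separating the $k$ clusters, a mass transport across them, a clash with non-amenability --- with the actual construction explicitly deferred (``genuine care is needed to define the interface system and the transported mass''). That deferred construction is precisely the content of \cite[Thm 8.1]{BLPS2} and is not routine: it rests on the unimodularity of one-ended, planar, quasi-transitive graphs (which you assert but do not justify), on the classification of the possible numbers of ends of clusters of invariant percolations, and on quantitative average-degree and Euler-characteristic bounds for invariant subgraphs of non-amenable unimodular graphs. None of these ingredients is supplied, and without them no contradiction is actually derived from $2\le N<\oo$ in the absence of insertion tolerance. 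As written, case (b) is a plan rather than a proof; either carry out the interface and mass-transport argument in detail or, as the paper does, cite \cite[Thm 8.1]{BLPS2}.
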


The sufficiency of (a) is proved in \cite[Thm 7.8]{LP} for transitive graphs, and the same proof is valid
for quasi-transitive graphs.
The sufficiency of (b) is proved in \cite[Thm 8.1]{BLPS2}.

\subsection{Planar duality}\label{sec:weaker}

Let $G=(V,E)$ be a plane graph, and write $\sF$ for the set of its faces.  The dual graph $G^{+}=(V^+,E^+)$ is defined as follows. The sets  $V^+$ and $\sF$ are in one--one correspondence, written $v_f \lra f$. Two vertices $v_f,v_g\in V^+$ are joined by $n_{f,g}$ parallel edges where $n_{f,g}$ is the number of edges of $E$ 
common to the faces $f,g\in\sF$. Thus, $E^+$ and $E$ are in one--one correspondence, written $e^+ \lra e$.

For a bond configuration $\om\in \Om_E$, we define the dual configuration $\om^+\in\Om_{E^+}$ by: 
for each dual pair $(e,e^+)\in E\times E^+$ of edges, we have
\begin{equation}
\om(e)+\om^+(e^+)=1.\label{pdl}
\end{equation}
In the following, $(\Om_E,\mu)$ is a bond percolation model on $G=(V,E)$.
Similar definitions apply to site percolation.

\begin{definition}\label{df23}
A probability measure $\mu$ is called \emph{weakly insertion-tolerant} if there exists a function
$f: E\times \Om_E \rightarrow \Om_E$ such that
\begin{letlist}
\item for all $e$ and all $\om\in\Om_E$, we have $\omega\cup\{e\}\subseteq f(e,\omega)$,
\item for all $e$ and all $\om$, the difference $f(e,\omega)\setminus[\omega\cup\{e\}]$ is finite, and
\item for all $e$ and each event $A$ satisfying $\mu(A)>0$, the image of $A$ under $f(e,\cdot)$ is an
event of strictly positive probability.
\end{letlist}
\end{definition}

\begin{definition}\label{df24}
A probability measure $\mu$ is called \emph{weakly deletion-tolerant} if there exists a function
$h: E\times \Om_E \rightarrow \Om_E$ such that
\begin{letlist}
\item for all $e$ and all $\omega\in\Om_E$, we have $\omega\setminus\{e\}\supseteq h(e,\omega)$,
\item for all $e$ and all $\omega$, the difference $[\omega\setminus\{e\}]\setminus h(e,\omega)$ is finite, and
\item for all $e$ and each event $A$  satisfying $\mu(A)>0$, the image of $A$ under $h(e,\cdot)$ is an event of strictly positive probability.
\end{letlist}
\end{definition}

\begin{lemma}[\mbox{\cite[Thm\ 8.30]{LP}}] \label{ll25}
Let $G=(V,E)\in\sQ$ be non-amenable and one-ended, and consider $G$ embedded canonically in the plane 
(such an embedding exists by Theorem \ref{p21}(c)). Let $(\Om_E,\mu)$ be an invariant, ergodic, bond percolation 
on $G$, assumed to be both weakly insertion-tolerant and weakly deletion-tolerant. 
For $\om\in\Om_E$, let $N(\om)$ be the number of 
infinite open components of $\om$, and $N^+(\om)$ the number of infinite open components of the dual process
$\om^+$ given in \eqref{pdl}. 
Then 
\begin{equation*}
\mu\bigl((N, N^+)\in\{(0,1),(1,0),(\infty,\infty)\}\bigr) = 1.
\end{equation*}
\end{lemma}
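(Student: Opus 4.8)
The plan is to reduce the statement to the assertion that $N$ and $N^+$ are each $\mu$-a.s.\ equal to a constant in $\{0,1,\infty\}$, and then to use planar duality together with non-amenability to exclude the six forbidden pairs. For the reduction I would apply Lemma \ref{l32}(b) twice. For the primal process $(\Om_E,\mu)$ on $G$, which is non-amenable, planar and one-ended, Lemma \ref{l32}(b) gives $\mu(N\in\{0,1,\infty\})=1$, and ergodicity forces $N$ to be a.s.\ constant. For the dual process I first check that the hypotheses transfer: in the canonical embedding the dual graph $G^+$ is again quasi-transitive (Theorem \ref{p21}(c)) and is non-amenable, planar and one-ended; and the relation \eqref{pdl}, $\om(e)+\om^+(e^+)=1$, shows that duality interchanges insertion and deletion, so that weak insertion-tolerance of $\mu$ is weak deletion-tolerance of $\mu^+$ and conversely. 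As $\mu$ is assumed to be \emph{both} weakly insertion- and deletion-tolerant, so is $\mu^+$, which is likewise invariant and ergodic. Hence Lemma \ref{l32}(b) applies to $\mu^+$ as well, and $N^+$ is a.s.\ a constant in $\{0,1,\infty\}$.

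There then remain nine possible pairs $(N,N^+)$, of which six must be excluded. I would isolate three duality facts, each with a mirror image obtained by exchanging $(G,\mu,N)$ with $(G^+,\mu^+,N^+)$ (legitimate by the symmetry just established): \emph{(P1)} $N=1\Rightarrow N^+=0$; \emph{(P2a)} $N=0\Rightarrow N^+\le 1$; and \emph{(P2b)} it is impossible that $N=0$ and $N^+=0$ simultaneously. Granting (P1), (P2a), (P2b) and their duals, a short check disposes of everything: (P1) and its dual eliminate $(1,1),(1,\infty),(\infty,1)$; (P2a) and its dual eliminate $(0,\infty),(\infty,0)$; and (P2b) eliminates $(0,0)$. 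What remains is exactly $\{(0,1),(1,0),(\infty,\infty)\}$, as required.

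The facts (P1) and (P2a) are topological and rest on the observation that, since exactly one of $e,e^+$ is open for each dual pair, an open primal path and an open dual path can never cross in the embedding. For (P2a): if $N=0$ but there were two distinct infinite dual clusters, a standard planar-separation argument (using that every face is bounded by a cycle, Remark \ref{rem:emb}(d)) produces an infinite primal cluster lying between them as a blocking cut, contradicting $N=0$; hence $N^+\le 1$. For (P1): if $N=1$, I would argue that the unique infinite primal cluster $C$ has exactly one end, invoking the ends theorem for infinite clusters of insertion-tolerant invariant percolation and verifying that it persists under the weaker hypothesis of weak insertion-tolerance; by planar duality a one-ended unique infinite primal cluster has only finite complementary regions, so no open dual ray can escape to infinity and $N^+=0$.

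Finally, (P2b) is where non-amenability enters decisively. If both $N=0$ and $N^+=0$, then every primal and every dual cluster is finite, so the plane $\sH$ is tiled by finite, interlocking primal and dual clusters. I would run a mass-transport accounting of Gauss--Bonnet type: relating the per-vertex expectations of the numbers of primal clusters, of dual clusters, and of open and closed edges yields the mean Euler characteristic of the configuration, which is non-negative when all clusters are finite, yet is strictly negative for a non-amenable (hence negatively curved, hyperbolic) one-ended quasi-transitive tiling; this contradiction excludes $(0,0)$. The two main obstacles are precisely (P1) and (P2b): in (P1) one must adapt the one-end-of-the-unique-cluster result and the indistinguishability machinery to the setting of merely \emph{weak} insertion/deletion tolerance, and in (P2b) the work lies in setting up the invariant mass-transport so that the Euler-characteristic density is well defined and in quoting its strict negativity for non-amenable planar quasi-transitive graphs; the topological separation claims underlying (P1) and (P2a), though intuitively clear, also require care in a general proper embedding.
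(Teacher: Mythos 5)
First, a point of reference: the paper does not prove this lemma at all --- it is imported verbatim as \cite[Thm 8.30]{LP} --- so your proposal can only be measured against that source's argument. Your overall architecture does match it in outline: apply Lemma \ref{l32}(b) to the primal process and (after checking that the canonical embedding makes $G^+$ quasi-transitive, one-ended and non-amenable, and that \eqref{pdl} swaps weak insertion- and deletion-tolerance) to the dual, reduce to nine candidate pairs, and eliminate six. Your (P2a) (two distinct infinite dual clusters force an infinite open primal interface between them) and (P2b) (the mass-transport/Euler-characteristic exclusion of $(0,0)$, which also requires the observation that one-ended planar quasi-transitive graphs are unimodular) are the standard steps and are sound in outline.

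The genuine gap is in (P1), in two places. First, the step ``a one-ended unique infinite primal cluster has only finite complementary regions'' is false as a topological statement: the upper half-plane of $\ZZ^2$ is a one-ended connected subgraph whose complement is infinite. Worse, ``every complementary component of $C$ is finite'' is essentially a restatement of ``$N^+=0$'' (an infinite complementary component of $C$ carries an infinite connected set of open dual edges, and conversely), so this step assumes what is to be proved. The configuration you must actually rule out is two interlocking one-ended clusters --- a primal ``half-plane'' $C$ and a dual ``half-plane'' $C^+$ occupying an infinite complementary component of $C$ --- and that cannot be done by topology alone: on $\ZZ^2$ the pair $(1,1)$ is excluded only by a Zhang/DRT-type argument (this is exactly what Lemma \ref{l73} of the paper does in the amenable case), and in the non-amenable case its exclusion again needs invariance, unimodularity and non-amenability, via a mass-transport argument applied to the interface between $C$ and $C^+$. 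So non-amenability enters decisively in (P1) as well, not only in (P2b) as you assert. Second, the tool that gives ``$N=1$ implies the unique cluster has one end'' is weak \emph{deletion}-tolerance (if $C$ had two or more ends, deleting a finite cutting set would produce two infinite clusters on an event of positive probability, contradicting $N=1$ a.s.), not the ``ends theorem for insertion-tolerant percolation'' you invoke; that part is repairable, but as written the citation points at the wrong mechanism.
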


\subsection{Graphs with two or more ends}\label{s6}

We summarise here the main results for critical percolation probabilities on multiply-ended graphs. 

\begin{theorem}[\cite{HPS, SR01}] \label{m2}
Let $G\in\sQ$ have two ends.  The critical percolation probabilities satisfy 
\begin{equation*}
\pcb (G)=\pcs (G)=\pub (G)=\pus (G)=1.
\end{equation*}
\end{theorem}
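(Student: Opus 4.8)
The plan is to exploit the essentially one-dimensional geometry of a two-ended graph: such a graph admits infinitely many disjoint finite cut-sets separating the two ends, and a Borel--Cantelli argument then shows that, for every $p<1$, these cuts are infinitely often entirely closed, trapping every open cluster in a bounded region. This forces all four critical probabilities up to $1$.

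First I would record the relevant structure of two-ended graphs in $\sQ$. Since $G$ has two ends it is quasi-isometric to $\ZZ$, and there is a finite set $S\subset V$ whose removal disconnects $G$ into exactly two infinite components, one for each end. By the structure theory of two-ended quasi-transitive graphs, $\Aut(G)$ contains an automorphism $\tau$ of infinite order that translates towards one end; consequently the translates $\tau^n S$ are all separating sets of common cardinality $K:=|S|$, and after passing to a subsequence $S_k:=\tau^{n_k}S$ we may assume the $S_k$ are pairwise disjoint, each separating the two ends, and ordered consistently along the $\ZZ$-direction with a bounded number of vertices between consecutive cuts.

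Second, I treat site percolation with $p<1$. Consider the events $A_k=\{\om(v)=0\text{ for all }v\in S_k\}$. Because the $S_k$ are pairwise disjoint, the events $A_k$ are independent under $\PP_p$, and $\PP_p(A_k)\ge(1-p)^K>0$. By the second Borel--Cantelli lemma, $\PP_p$-a.s.\ infinitely many $A_k$ occur, and indeed cofinally in both directions along the line. When $A_k$ occurs, the entirely-closed cut $S_k$ cannot be crossed by any $1$-cluster, so every $1$-cluster lies between two consecutive occurring cuts, a region containing only finitely many vertices, and is therefore finite. Hence $N=0$ $\PP_p$-a.s.\ for every $p<1$, giving $\pcs(G)=1$; the same argument applied to $0$-clusters gives $\ol N=0$ for $p>0$. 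Since trivially $\pcs(G)\le\pus(G)\le1$, we also obtain $\pus(G)=1$.

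Third, I handle bond percolation by the identical mechanism, replacing each vertex cut $S_k$ by the finite edge-cut $\De C_k$ separating the two ends (the edge-boundary of one of the infinite pieces). These edge-cuts have uniformly bounded size $K'$ and may be taken pairwise disjoint, so the event that all edges of $\De C_k$ are closed has probability at least $(1-p)^{K'}>0$; the Borel--Cantelli argument then confines every open bond-cluster to a bounded region, yielding $\pcb(G)=\pub(G)=1$. The main obstacle is the first step, namely rigorously producing the infinite family of uniformly bounded, pairwise-disjoint separating sets; this rests on the structure theorem for two-ended quasi-transitive graphs (the existence of a translation-like automorphism with finite quotient), which is exactly the input supplied by \cite{HPS, SR01}. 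Once that structure is available, the probabilistic steps are routine.
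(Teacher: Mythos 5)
Your argument is correct, but be aware that the paper offers no proof of Theorem \ref{m2}: it is stated as a quoted result, with the equalities $\pub=\pus=1$ attributed to \cite[eqn (3.7)]{SR01} and \cite{HPS} for transitive graphs and to \cite[Exer.\ 7.9]{LP} for quasi-transitive ones. So you are supplying a self-contained proof where the paper supplies only citations, and your proof holds together. The probabilistic core is sound: pairwise disjoint, uniformly bounded separating sets give independent all-closed events of probability at least $(1-p)^K>0$; the second Borel--Cantelli lemma, applied separately to the two directions along the line, produces occurring cuts cofinally on both sides; and the region between two occurring cuts $S_k$, $S_{k'}$ is finite because $G\setminus(S_k\cup S_{k'})$ has only finitely many components (each is adjacent to the finite set $S_k\cup S_{k'}$, whose vertices have finite degree), of which exactly two are infinite since $G$ has exactly two ends. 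Hence every cluster is a.s.\ finite for all $p<1$, so $\pcs(G)=\pcb(G)=1$, and $\pus=\pub=1$ follows from $\pc\le\pu\le 1$. Two remarks. First, your attribution of the structural input to \cite{HPS, SR01} is misplaced: those references prove the percolation statement itself (for transitive graphs), whereas the existence of a translation-like automorphism of a two-ended quasi-transitive graph is older structure theory and should be cited as such if you rely on it. Second, you can bypass the structure theorem entirely: a two-ended quasi-transitive graph has linear growth, so $|B_N(v_0)|\le CN$, whence infinitely many spheres $\partial B_n(v_0)=B_n(v_0)\setminus B_{n-1}(v_0)$ contain at most $2C$ vertices; these spheres are pairwise disjoint, and every infinite cluster meets $\partial B_n(v_0)$ for all sufficiently large $n$, so the same Borel--Cantelli argument runs with no mention of $\Aut(G)$.
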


\begin{theorem}\label{m3}
Let $G\in\sQ$ have infinitely many ends. Then 
$$
\pcb(G)\le \pcs(G)<\pub (G)=\pus (G)=1.
$$
\end{theorem}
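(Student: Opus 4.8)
The plan is to treat the three assertions $\pcb(G)\le\pcs(G)$, $\pcs(G)<1$, and $\pub(G)=\pus(G)=1$ separately, with the last being the substantial one. The inequality $\pcb(G)\le\pcs(G)$ holds for every infinite connected graph and is standard (see \cite{GP}); I would simply cite it. For $\pcs(G)<1$ I would exhibit a subgraph on which site percolation is supercritical for some $p<1$: since $G$ has infinitely many ends it contains an infinite ``branching'' subgraph $B$ (a binary tree with each edge replaced by a path of uniformly bounded length), and such a $B$ has $\pcs(B)<1$; as $B\subseteq G$, this gives $\pcs(G)\le\pcs(B)<1$. This already yields $\pcs(G)<1=\pub(G)$ once the final equality is in hand.

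For $\pub(G)=\pus(G)=1$ I would show that, for site percolation, there is \emph{no} unique infinite cluster for any $p<1$; the bond case is identical. Since $\PP_p$ is ergodic for quasi-transitive $G$ and is insertion-tolerant for $p\in(0,1)$, Lemma~\ref{l32}(a) gives that $N$ is $\PP_p$-a.s.\ a constant lying in $\{0,1,\oo\}$. By the monotonicity of uniqueness in Lemma~\ref{lsu}, the set $\{p:\PP_p(N=1)=1\}$ is an up-set in $[0,1]$; hence it suffices to prove $N=\oo$ a.s.\ for every $p$ in some interval $(p_0,1)$, since an up-set meeting no point of $(p_0,1)$ is contained in $\{1\}$, forcing $\pus(G)=1$.

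The mechanism is to split a putative unique cluster across a finite cut. I would first establish the \emph{structural claim}: there is a finite set $S\subset V$ such that $G\sm S$ has at least two infinite components $A_1,A_2$, each of which contains an infinite subgraph $B_i$ of the bounded-subdivision binary-tree type above; set $p_0:=\max_i\pcs(B_i)<1$. Fix $p\in(p_0,1)$, and let $U_i$ be the event that $A_i$ contains an infinite open cluster ($i=1,2$), and $W$ the event that every vertex of $S$ is closed. These three events depend on the disjoint vertex-sets $A_1$, $A_2$, and $S$, and so are independent; moreover $\PP_p(U_i)$ is at least the probability that $B_i$ percolates, which is positive because $p>\pcs(B_i)$, while $\PP_p(W)=(1-p)^{|S|}>0$. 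On $U_1\cap U_2\cap W$ there are infinite open clusters inside $A_1$ and inside $A_2$; since $A_1$ and $A_2$ are distinct components of $G\sm S$ and every vertex of $S$ is closed, no open path joins them, so these are two distinct infinite clusters. Thus $\PP_p(N\ge 2)>0$, whence $N\ge 2$ a.s.\ by ergodicity, and therefore $N=\oo$ a.s.\ by the trichotomy of Lemma~\ref{l32}. The statement $\pub(G)=1$ follows by the identical argument, using a finite \emph{edge}-cut $S$ and the event that all its edges are closed.

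The main obstacle is the structural claim, whose proof rests on the structure theory of quasi-transitive graphs with infinitely many ends (Dunwoody's structure trees; cf.\ Theorem~\ref{p13} and \cite{Bab97,LP}): such a $G$ admits an $\Aut(G)$-invariant tree $T$ with infinitely many ends, whose edges correspond to finite cuts of $G$. Choosing an edge of $T$ with infinitely many ends of $T$ on each side yields a finite cut $S$ of $G$ for which both sides $A_1,A_2$ carry infinitely many ends of $G$; each such side therefore branches repeatedly, and quasi-transitivity bounds both the cut-sizes and the distances between successive cuts, so the branching can be realised by an embedded binary tree with \emph{uniformly bounded} subdivision, giving $\pcs(B_i)<1$ as required. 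Verifying that both sides may be taken infinitely-ended, and that quasi-transitivity indeed supplies the uniform bound, is the delicate point; everything downstream of the structural claim is routine.
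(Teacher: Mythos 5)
Your proposal is correct in outline, but it is worth knowing that the paper does not prove this theorem at all: it assembles it from the literature, citing \cite{Ham61} for $\pcb\le\pcs$, \cite[eqn (3.7)]{SR01}, \cite{HPS} and \cite[Exer.\ 7.9]{LP} for $\pub=\pus=1$, and \cite[Thm 2]{bs96} for $\pcs<1$ (the latter applies because, by Theorem \ref{p13}, a quasi-transitive graph with infinitely many ends is non-amenable). Your cut-splitting argument for $\pus=\pub=1$ --- two independently percolating infinite components of $G\sm S$ with $S$ entirely closed, followed by ergodicity, the trichotomy of Lemma \ref{l32}, and the up-set property from Lemma \ref{lsu} --- is essentially the argument underlying the cited references, and all the probabilistic steps are sound. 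Where you genuinely diverge is in proving $\pcs<1$ via an embedded binary tree with uniformly bounded subdivisions; the paper instead routes through non-amenability, and you could shorten your write-up by doing the same (Theorem \ref{p13} plus \cite[Thm 2]{bs96} are both already quoted in the paper). That said, you still need some version of your structural claim for the uniqueness argument, since you must guarantee that \emph{both} sides of the cut percolate with positive probability for $p$ near $1$; this is the one place where real work remains. Two remarks on it: (i) that both infinite components of $G\sm S$ may be taken to carry infinitely many ends follows from the fact that the end space of a quasi-transitive graph with infinitely many ends is perfect, so every infinite component of $G\sm S$ (which necessarily contains at least one end) contains infinitely many; (ii) the uniform bound on subdivision lengths does come from quasi-transitivity via translates of a fixed finite separator, but as written this is a sketch resting on structure-tree theory rather than a proof. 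Given that the paper itself treats the entire theorem as a literature summary, this level of detail is defensible, but if you intend a self-contained proof, the structural claim is the lemma you must actually write out.
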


The standard inequality $\pcb\le\pcs$ holds for all graphs, and was stated in \cite{jmh61}. 
The corresponding strict inequality
was explored in \cite[Thm 2]{GS98} for bridgeless, quasi-transitive graphs. 
The equalities $\pub = \pus=1$ were proved for transitive graphs in \cite[eqn (3.7)]{SR01} (see also \cite{HPS}),
and feature in \cite[Exer.\ 7.9]{LP} for quasi-transitive graphs.
The inequality $\pcs< 1$ for non-amenable graphs was given in 
\cite[Thm 2]{bs96}.

\subsection{FKG inequality}

For completeness, we state the well-known FKG inequality. See, for example, \cite[Sect.\ 2.2]{GP} for further details.

\begin{theorem}[FKG inequality,  \cite{FKG, HR}]
Let $\mu$ be a strictly positive probability measure on $\Om_V$ satisfying the  \emph{FKG lattice condition}:
\begin{equation}
\mu(\omega_1\vee\omega_2)\mu(\omega_1\wedge\omega_2)\geq \mu(\omega_1)\mu(\omega_2),\qquad \omega_1,\omega_2\in \{0,1\}^{V}.\label{fkgl}
\end{equation}
For any increasing events $A,B\subseteq\{0,1\}^{V}$, we have that $\mu(A\cap B)\geq \mu(A)\mu(B)$.
\end{theorem}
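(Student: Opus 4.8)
The plan is to prove the equivalent statement for increasing \emph{functions} and then specialise to indicators of events. Write $\mu(f)$ for the mean of a function $f\colon\Om_V\to\RR$ under $\mu$. Since the indicator $\id_A$ of an increasing event $A$ is an increasing function and $\id_{A\cap B}=\id_A\,\id_B$, the desired inequality $\mu(A\cap B)\ge\mu(A)\mu(B)$ is exactly the case $f=\id_A$, $g=\id_B$ of
\begin{equation*}
\mu(fg)\ge \mu(f)\,\mu(g)\qquad\text{for all increasing } f,g.
\end{equation*}
So it suffices to prove this functional form, and I would do so first for finite $V$, where \eqref{fkgl} is meaningful as stated.

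For finite $V$ with $|V|=n$, I would argue by induction on $n$. For $n=1$ the measure lives on $\{0,1\}$, and a direct expansion gives $\mu(fg)-\mu(f)\mu(g)=\mu(\{0\})\,\mu(\{1\})\,(f(1)-f(0))(g(1)-g(0))\ge 0$, since $f,g$ increasing makes both factors nonnegative; this settles the base case. For the inductive step, fix the coordinate $v=n$ and condition on $\om(n)$. For each fixed value $k\in\{0,1\}$, the conditional measure $\mu(\,\cdot\mid\om(n)=k)$ on $\{0,1\}^{n-1}$ again satisfies the lattice condition \eqref{fkgl} (a routine check, as fixing a coordinate commutes with $\vee$ and $\wedge$), so the induction hypothesis yields
\begin{equation*}
\mu(fg\mid \om(n)=k)\ \ge\ F(k)\,G(k),\qquad F(k):=\mu(f\mid\om(n)=k),\quad G(k):=\mu(g\mid\om(n)=k).
\end{equation*}
Averaging over $\om(n)$ gives $\mu(fg)\ge\mu(FG)$, where $F,G$ are now functions of the single variable $\om(n)$. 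The base case, applied to the marginal law of $\om(n)$, then gives $\mu(FG)\ge\mu(F)\mu(G)=\mu(f)\mu(g)$ (the last equality by the tower property), \emph{provided} $F$ and $G$ are both increasing in $\om(n)$. This provision is the crux.

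The main obstacle is therefore to show $k\mapsto F(k)$ is increasing, i.e.\ that conditioning on $\om(n)=1$ stochastically dominates conditioning on $\om(n)=0$ when tested against the increasing function $f$ of the remaining coordinates. This is precisely where the lattice condition does its work. Writing $\mu_1=\mu(\,\cdot\mid\om(n)=1)$ and $\mu_0=\mu(\,\cdot\mid\om(n)=0)$, and pairing configurations $\xi 1$, $\eta 0$ on $\{1,\dots,n\}$ (so that $\xi 1\vee\eta 0=(\xi\vee\eta)1$ and $\xi 1\wedge\eta 0=(\xi\wedge\eta)0$), the normalising constants cancel and \eqref{fkgl} reduces exactly to the Holley hypothesis $\mu_1(\xi\vee\eta)\mu_0(\xi\wedge\eta)\ge\mu_1(\xi)\mu_0(\eta)$. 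Holley's inequality then gives $\mu_1\succeq_{\mathrm{st}}\mu_0$, whence $F(1)=\mu_1(f)\ge\mu_0(f)=F(0)$, and likewise for $G$. Since Holley's inequality is itself a standalone two-measure statement provable by the same conditioning induction on $n$ (with no reference to FKG, so there is no circularity), I would isolate and prove it once and then quote it. I expect the fiddly part to be the bookkeeping in verifying the Holley hypothesis — cancelling normalisers and invoking strict positivity so that all conditional probabilities are well defined.

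Finally, to pass from finite $V$ to an infinite vertex set, I would approximate each increasing event $A$ by increasing events depending on finitely many coordinates $W\subseteq V$, apply the finite-$V$ inequality to the projected measure on $\{0,1\}^W$, and let $W\uparrow V$; the correlation inequality survives the limit by the martingale convergence theorem (equivalently, by dominated convergence for the indicators), yielding $\mu(A\cap B)\ge\mu(A)\mu(B)$ in general.
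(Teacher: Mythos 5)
The paper offers no proof of this theorem: it is stated \lq for completeness' and attributed to \cite{FKG, HR}, with \cite[Sect.\ 2.2]{GP} indicated for further details, so there is no in-paper argument to compare yours against. Your proposal is the classical proof of precisely this result, and the finite-$V$ part is correct: the $n=1$ computation, the observation that fixing a coordinate preserves the lattice condition \eqref{fkgl}, the reduction of the monotonicity of $F$ and $G$ to Holley's inequality, and the identification of the Holley hypothesis for the two conditional measures with \eqref{fkgl} applied to the pair $\xi 1$, $\eta 0$ are all exactly as in the standard references; Holley's inequality is indeed a self-contained two-measure statement, so there is no circularity. The one place to be careful is the infinite-volume step. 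For infinite $V$ the hypothesis \eqref{fkgl} is degenerate as literally written (individual configurations typically carry zero mass), so it must be read as a condition on finite-dimensional conditional distributions; with that reading, your martingale argument needs $\mu(A\mid\sF_W)$ to be an increasing function of the coordinates in $W$, which is immediate for product measures such as $\PP_p$ but for general $\mu$ requires one further appeal to Holley, applied to the conditional measures, to get stochastic monotonicity of $\mu(\cdot\mid\om_W)$ in $\om_W$. This is routine but should be said explicitly; since the paper only ever applies the inequality to product measures, nothing is lost in its intended applications.
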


\section{Planar site percolation as a bond model}\label{sec:sbond}

Let $M=(V,E)\in\sQ$  be a mosaic,
and let $(G_1,G_2)$ be a matching pair derived from $M$ according to the partition $\sF_4(M)=F_1\cup F_2$.
If $F_i\ne\es$, then $G_i$ is non-planar. This is an impediment to consideration of the dual graph of $G_i$, which
in turn is overcome by the introduction of so-called facial sites.

Let $\sF=\sF(M)$ be the set of faces of $M$ (following \cite{K82}, we include triangular faces).
The triangular faces of $\sF$ do not appear in $F_1\cup F_2 = \sF_4$, but we allocate each such face arbitrarily to
either $F_1$ of $F_2$ (for concreteness, we add them all to $F_1$). 
One may
replace the mosaic $M$ 
by the triangulation $\what M$ obtained by placing a \emph{facial site} $\phi(F)$ 
inside each face $F\in\sF$, 
and joining  $\phi(F)$ to each vertex in the boundary of $F$. 
(See \cite[Sec.\ 2.3]{K82} and \cite[Sect.\ 4.2]{GL-match}.)

When considering site percolation on $M$ (\resp, $M_*$), 
one declares the facial
sites of $\what M$ to be invariably closed (\resp, open). 
Site percolation on $G_i$ is equivalent to site percolation on $\what M$ subject to:
\begin{equation}\label{eq:-10}
\text{a facial site $\phi(F)$ is declared open if $F\in F_i$ and closed if $F\in\sF\sm F_i$}.
\end{equation}
Note that, if $F$ is a triangular face,  the state of $\phi(F)$  is independent of the connectivity of 
its other vertices.

The \emph{facial graph} $\what G_i$ is 
obtained by adding to $M$ the facial sites of $F_i$ only, together with their incident edges. 
We write $\what G_i=(V_i,E_i) := (V\cup \Phi_i,E\cup\eta_i)$ where $\Phi_i$ is the set of facial sites of $G_i$
and $\eta_i$ is the set of edges incident to facial sites.
We shall consider two site percolation processes, namely, 
percolation of open sites on $\what G_1$ and of closed sites on $\what G_2$ (subject to \eqref{eq:-10}).
To this end, for $\om\in\Om_V$,  let $\om_1$ (\resp, $\om_2$) be the site configuration on 
$\what G_1$ (\resp, $\what G_2$)  that agrees with $\om$ on $V$ and is
\emph{open} on $\Phi_1$ (\resp, \emph{closed} on $\Phi_2$).

Given $\om\in\Om_V$, we construct a bond configuration $\be_{\om_1}\in \Om_{ E\cup\eta_1}$ by 
\begin{equation}\label{eq:sitebond}
\be_{\om_1}(e)=\begin{cases} 1 &\text{if } \om_1(u)=\om_1(v) = 1,\\
0 &\text{otherwise},
\end{cases}
\end{equation}
where $e=\langle u,v\rangle\in E\cup\eta_1$. 
Let $\be_{\omega_1}^+ := 1- \be_{\om_1}$ be the corresponding dual configuration on the dual 
graph $\what G_1^+=(V_1^+, E_1^+)$ of $\what G_1$ as in \eqref{pdl}. 
Let $\what G_1(\be_{\om_1})$
(\resp, $\what G_1^+(\be_{\om_1}^+)$)
be the graph with vertex-set $V_1$ (\resp, $V_1^+$) endowed with the open edges of $\be_{\om_1}$
(\resp, $\be_{\om_1}^+$).
Note that, if $\omega$ has law $\PP_p$, then the law of $\be_{\om_1}$
is one-dependent. We may identify the vector $\be_{\om_1}$ with the set of its open edges.

\begin{lemma}\label{lem9}
Suppose $\omega\in\Om_V$ has law $\PP_p$ where $p\in(0,1)$.
The law $\mu$ of $\be_{\om_1}$ is weakly deletion-tolerant 
and weakly insertion-tolerant.
Moreover, $\mu$ is ergodic.
\end{lemma}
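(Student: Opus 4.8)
The plan is to exploit the fact that the bond field $\be_{\om_1}$ is a deterministic, finitary factor of the product site field: writing $\Theta(\om):=\be_{\om_1}$, the map $\Theta:\Om_V\to\Om_{E\cup\eta_1}$ reads each bond from the states of the two sites at its endpoints, and $\mu=\PP_p\circ\Theta^{-1}$. I would realize bond-insertions and bond-deletions as \emph{local modifications of the underlying site configuration}---opening or closing a single vertex---and then verify the three clauses of Definitions \ref{df23} and \ref{df24}, and finally obtain ergodicity from equivariance of $\Theta$. Throughout I use that every edge of $\what G_1$ has at least one endpoint in $V$ (two facial sites are never adjacent) and that $\what G_1$ is locally finite.

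Deletion is the easier half. Given $e=\langle u,v\rangle$, fix an endpoint $x\in V$ and let $h(e,\xi)$ be obtained from $\xi$ by closing every bond incident to $x$. Since closing the site $x$ in $\om$ closes precisely the bonds at $x$ and alters nothing else, one has $h(e,\Theta(\om))=\Theta(\Pi_{\neg x}\om)$, so $h(e,\cdot)$ is a genuine function of the bond configuration. Clauses (a), (b) of Definition \ref{df24} are immediate, since $h(e,\xi)\subseteq\xi\sm\{e\}$ and the removed surplus consists of at most $\deg(x)-1$ bonds; clause (c) follows from the positivity principle below applied to the one-site map $\om\mapsto\Pi_{\neg x}\om$.

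Insertion is the crux, and here lies the main obstacle: $\Theta$ is not injective, because an open vertex all of whose neighbours are closed produces exactly the same bonds as a closed vertex. Consequently the naive recipe ``open $u$ and $v$'' does \emph{not} descend to a well-defined function of $\xi$. I would sidestep this by declaring a vertex $w$ \emph{visibly open} in $\xi$ if it carries an open incident bond or is a facial site, and defining $f(e,\xi)$ to add to $\xi$ the bond $e$ together with all bonds $\langle u,w\rangle$ and $\langle v,w\rangle$ whose other endpoint $w$ is visibly open. A direct check then shows $f(e,\Theta(\om))=\Theta(\Si(\om))$, where $\Si$ opens $u$ and $v$ and closes every non-facial neighbour of $\{u,v\}$ that carries no open incident bond; crucially $\Si$ alters only the finitely many coordinates in $\Lambda:=\{u,v\}\cup(\text{non-facial neighbours of }u,v)$, and its output on $\Lambda$ is determined by $\om$ on a finite window $R\supseteq\Lambda$. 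Clauses (a), (b) of Definition \ref{df23} are then clear, the inserted surplus being bounded by $\deg(u)+\deg(v)$.

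It remains to verify clause (c) for both maps and to establish ergodicity. For (c) I would isolate the elementary principle that any site map changing coordinates only inside a finite set $\Lambda$, and whose values on $\Lambda$ depend only on a finite window $R$, carries positive-$\PP_p$-measure sets to positive-measure sets: conditioning on the finitely many patterns on $R$, some pattern carries positive conditional measure, and the map sends the corresponding slab onto the slab obtained by fixing $\om|_R$ to the deterministic image pattern while leaving the off-$R$ configuration unchanged, which has positive measure because $p\in(0,1)$ assigns positive mass to every finite pattern. Applying this to $\Si$ (respectively to $\om\mapsto\Pi_{\neg x}\om$) and using $f(e,A)\supseteq\Theta(\Si(\Theta^{-1}A))$ gives $\mu(f(e,A))\ge\PP_p(\Si(\Theta^{-1}A))>0$, and likewise for $h$. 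Finally, $\mu$ is ergodic because $\Theta$ is equivariant under the quasi-transitive subgroup $\Ga\le\Aut(M)$ furnished by the matching-pair construction---$\Ga$ preserves $\Phi_1$, hence acts on $\what G_1$ and commutes with $\Theta$---while $\PP_p$ is $\Ga$-ergodic since $\Ga$ acts on $V$ quasi-transitively with infinite orbits; thus every $\Ga$-invariant bond event pulls back under $\Theta$ to a $\Ga$-invariant site event of probability $0$ or $1$.
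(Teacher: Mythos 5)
Your proof is correct and follows the same underlying strategy as the paper's: bond insertions and deletions are realized by opening or closing sites of the underlying configuration $\om$, clauses (a) and (b) of Definitions \ref{df23} and \ref{df24} follow from local finiteness, clause (c) from the fact that $\PP_p$ with $p\in(0,1)$ assigns positive mass to every finite pattern, and ergodicity from equivariance of the site-to-bond map under a subgroup of $\Aut(M)$ acting quasi-transitively. The one point where you go beyond the paper deserves comment. The paper sets $f(e,\be_{\om_1})=\be_{\om_1}\cup(D_u\cup D_v\cup\{e\})$ with $D_w$ the set of edges $\langle w,x\rangle$ such that $\om(x)=1$; since an open vertex all of whose $\what G_1$-neighbours are closed yields the same bond configuration as a closed one, $D_w$ is not determined by $\be_{\om_1}$ alone, so this $f$ is not literally a map $E\times\Om_E\to\Om_E$ as Definition \ref{df23} requires (the deletion map does not suffer from this, since $\be_{\om_1}\sm D_u$ coincides with removing all bonds at $u$). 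Your \lq visibly open' device repairs this: you add only those bonds $\langle u,w\rangle$, $\langle v,w\rangle$ whose far endpoint is certified open by the bond configuration itself, which amounts to selecting the minimal site configuration in $\Theta^{-1}(\xi)$, and you check that the resulting intrinsic $f$ is still realized by a finitary site modification $\Si$, so positivity goes through unchanged. This is a genuine, if minor, tightening of the paper's argument rather than a different proof; your deletion map, positivity principle, and ergodicity argument otherwise coincide with the paper's.
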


\begin{proof}
Let $e=\langle u,v\rangle\in E\cup\eta_1$  and $\om\in\Om_V$. For $w\in V$,
let $D_w$ be the set of edges of $\what G_1$ of the form $\langle w,x\rangle$ with $\om(x)=1$.
Select an endvertex, $u$ say, of $e$ that is not a facial site (such a vertex always exists), and define
$$
f(e,\be_{\om_1}) = \be_{\om_1}\cup (D_u\cup D_v\cup \{e\}),\qq 
h(e,\be_{\om_1}) = \be_{\om_1}\sm (D_u\cup\{e\}).
$$
The edge-configuration $f(e,\be_{\om_1})$ (\resp, $h(e,\be_{\om_1})$) is that obtained by setting 
$u$ and $v$ to be open (\resp, $u$ to be closed). With these functions $f$, $h$, the conditions of Definitions \ref{df23} and \ref{df24}
hold since $G$ is locally finite.
The ergodicity holds by the assumed quasi-transitivity of $G_1$ and the fact that $\PP_p$ is a product measure (see the comment in Section \ref{ssec:percnot}). 
\end{proof}

\begin{figure}
\centerline{\includegraphics[width=0.6\textwidth]{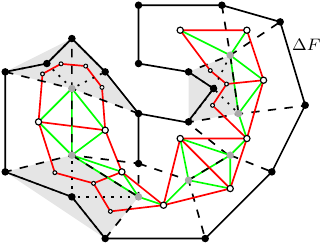}}
\caption{
An illustration of the one--one correspondence between $C_1(F)$ and $C_2(F)$ of Proposition \ref{prop:3}.
The black line is the boundary of the face $F$; the dashed lines are edges of $M$ inside $F$; 
the dotted lines are edges of $\eta_1$.  
The shaded regions are faces of $M$ that belong to $F_1$; the black points are open vertices; the grey points are closed vertices; the white points are dual vertices of $\what G_1$. 
The green graph is the $0$-cluster $C_2(F)$ of $\what G_2(\om)$ (i.e., the $1$-cluster of $\what G_2(1-\om)$)
that corresponds 
to the red cluster $C_1(F)$ of $\what G_1^+(\be^+_{\om_1})$.}\label{fig:color}
\end{figure}

For $\om\in\Om_V$,
let $\what G_1(\om)$ be the subgraph of $\what G_1$ induced by the set of $\om_1$-open vertices (that is, the set of $v$ with $\om_1(v)=1$), and define $\what G_2(\ol{\om})$ similarly in terms of closed vertices of $\om_2$ in $\what G_2$.

We make some notes concerning the relationship between $\what G_1(\om)$, $\what G_2(\ol\om)$, 
and $\what G_1^+(\be^+_{\om_1})$, as illustrated
in Figure \ref{fig:color}.  A \emph{cutset} of a graph $H$ is a subset of edges whose removal disconnects some 
previously connected component of $H$, and which
is minimal with this property. Recall that a \emph{face} of a plane graph $H=(V',E')$ is a connected 
component of
$\sH\sm E'$. A face $F$ can be bounded or unbounded, and it need not be simply connected.
It has a boundary $\De F$ comprising edges of $H$; 
even when $F$ is bounded and simply connected, the set $\De F$ of edges
need not be cycle of $H$ unless $H$ is $2$-connected.

\begin{proposition}\label{prop:3} 
Let $M=(V,E)\in\sQ$ be one-ended and embedded canonically in $\sH$.
Let $\om\in\Om_V$,
and let $F$ be a face (either bounded or unbounded) of $\what G_1(\om)$. 
\begin{letlist}
\item Let $C$ be a cycle (\resp, doubly-infinite path) of $\what G_1(\om)$. 
The set of  edges of $\what G_1^+$ intersecting
$C$ forms a finite (\resp, infinite) cutset of $\what G_1^+$. 

\item The set $F\cap V_1^+$ of dual vertices of $\what G_1$ inside $F$, together with the set of
open edges of $\be_{\om_1}^+$ lying inside $F$, forms a non-empty,
connected component $C_1(F)$ of $\what G_1^+(\be^+_{\om_1})$. 

\item The set $F\cap(V\cup \Phi_2)$ of vertices of $\what G_2$ inside $F$ forms a 
(possibly empty) $0$-cluster $C_2(F)$ of $\what G_2(\ol\om)$. 

\item 
Either each of $F$, $C_1(F)$, $C_2(F)$ is bounded or each is unbounded.
\end{letlist}
\end{proposition}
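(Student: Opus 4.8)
My plan is to establish (a) by planar duality, to read off (b) from it, to reduce (c) to a matching property of a single triangulation, and finally to deduce (d) from topological finiteness. For part~(a), a cycle $C$ of $\what G_1(\om)$ is a simple closed curve in $\sH$, so by the Jordan curve theorem it separates $\sH$ into the bounded region $C^\circ$ and its unbounded complement. Each edge of $C$ is an edge of $\what G_1$ and is crossed by a unique dual edge of $\what G_1^+$; I would verify that the collection of these dual edges is exactly the set of dual edges joining a dual vertex in $C^\circ$ to one outside $\ol C$, so that their removal disconnects the interior dual vertices from the exterior ones, and that this cutset is minimal since reinstating any one of them restores a crossing. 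When $C$ is instead a doubly-infinite path, I would invoke the corresponding separation statement for a proper simple bi-infinite arc, valid because $\what G_1$ is properly embedded and one-ended: $C$ divides $\sH$ into two unbounded pieces, and the crossing dual edges, in bijection with the infinitely many edges of $C$, form an infinite cutset.

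For part~(b) the governing observation is that every edge in $\pd F$ lies in $\what G_1(\om)$ and is therefore $\be_{\om_1}$-open, whence its dual edge is $\be^+_{\om_1}$-closed. Consequently no $\be^+_{\om_1}$-open edge crosses $\pd F$, so each open dual edge lies strictly inside a single face of $\what G_1(\om)$ and dual vertices in $F$ are joined by open dual edges only to other dual vertices in $F$. To prove that they are all so joined, I would take two dual vertices lying in $F$, join the corresponding faces of $\what G_1$ by an arc $\ga\subseteq F$ meeting no vertex and crossing edges transversally; since $\ga$ remains in the open set $F$ it meets only edges interior to $F$, each $\be_{\om_1}$-closed and hence dual to a $\be^+_{\om_1}$-open edge, and the resulting sequence of dual edges is an open dual path. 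Non-emptiness holds because the non-empty open set $F$ contains the interior of at least one face of $\what G_1$ and hence its dual vertex; together with the non-crossing property this shows $C_1(F)$ is a full connected component rather than a proper connected subgraph.

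Part~(c) is the substantive step, and the one I expect to be the main obstacle. The key device is to pass to the full triangulation $\what M$ obtained by inserting a facial site into every face of $M$; this is a genuine triangulation because every face of $M$ is bounded by a cycle (Remark~\ref{rem:emb}(d)). Since the facial sites of $\Phi_1$ are open and those of $\Phi_2$ closed under $\om_2$, I would check the two identifications that make the argument work: $\what G_1(\om)$ is precisely the subgraph of $\what M$ induced by its open vertices, and $\what G_2(\ol\om)$ is precisely the subgraph of $\what M$ induced by its closed vertices (the only closed--closed edges of $\what M$ being the $M$-edges between closed vertices and the $\eta_2$-edges from $\Phi_2$). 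In particular every $\what G_2$-vertex strictly inside $F$ is closed, since an open vertex of $V$ lies on $\what G_1(\om)$ rather than in the interior of one of its faces. It then remains to prove the classical matching property of the triangulation $\what M$: the closed vertices inside a face $F$ of the open subgraph form a single closed cluster. Every triangle of $\what M$ whose interior lies in $F$ carries at least one closed vertex, for otherwise all three of its edges would be open and its interior could not lie in $F$; and for two closed vertices $x,y\in F$ an arc $\ga\subseteq F$ from $x$ to $y$ crosses a chain of edge-adjacent triangles whose shared edges each have a closed endpoint, so that one may walk through closed vertices---stepping along a fully-closed shared edge, or pivoting at the common closed endpoint of a mixed shared edge, and using that the closed vertices of a single triangle are mutually adjacent---to produce a path of $\what G_2(\ol\om)$ from $x$ to $y$ lying in $F$. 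Maximality, so that $C_2(F)$ is a genuine $0$-cluster (possibly empty, as when $F$ is a single all-open triangle), follows because leaving $F$ would force the closed path to cross the open set $\pd F$.

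Finally, part~(d) follows from topological finiteness. By Remark~\ref{rem:emb}(a) the proper embedding of $M$, and hence those of $\what G_1$ and $\what M$, have only finite faces. If $F$ is bounded then $\ol F$ is compact and, by properness, meets only finitely many cells, so $C_1(F)$ and $C_2(F)$ are finite and thus bounded. If $F$ is unbounded then it is tiled by infinitely many bounded faces of $\what G_1$ (equivalently, by infinitely many triangles of $\what M$), which by local finiteness forces infinitely many dual vertices of $\what G_1$ and infinitely many closed vertices of $\what M$ to lie in $F$; a locally finite, properly embedded graph cannot place infinitely many vertices in a bounded region, so both $C_1(F)$ and $C_2(F)$ are unbounded. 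Hence $F$, $C_1(F)$ and $C_2(F)$ are bounded or unbounded together.
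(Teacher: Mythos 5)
Your proposal is correct, and it reaches the conclusion by a genuinely more self-contained route than the paper. The paper's own proof treats (a) as immediate from the definition of $\be_{\om_1}$, establishes (b) and (c) by decomposing $F$ into faces of $\what G_1$ and then invoking two \lq standard' properties of dual and matching pairs --- citing the appendix of Sykes--Essam and Kesten's Proposition A.1, and remarking that the dual-pair property follows from the matching-pair property by passing to covering (line) graphs --- and obtains (d) from properness of the embedding together with uniform boundedness of faces. You instead prove both \lq standard' properties from scratch: for (b), the transversal-arc argument showing that the dual vertices inside $F$ are linked by $\be^+_{\om_1}$-open dual edges and that no open dual edge crosses $\pd F$; for (c), the reduction to the full triangulation $\what M$ (using that $\what G_1(\om)$ and $\what G_2(\ol\om)$ are exactly the subgraphs of $\what M$ induced by the open and by the closed vertices, which works precisely because every $\eta_1$-edge has an open endpoint and every $\eta_2$-edge a closed one), followed by the triangle-chain walk along an arc, exploiting that the three vertices of any face of $\what M$ are pairwise adjacent. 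This is essentially the content of Kesten's cited proposition, so your route buys self-containedness --- useful here, since the paper concedes that Kesten's hypotheses differ slightly from the present setting --- at the cost of length; the paper buys brevity at the cost of a citation. Two points of wording you should tighten. In (a), minimality of the cutset is not quite \lq reinstating any one of them restores a crossing': you need that the dual vertices strictly inside $C$ (and those strictly outside) induce connected subgraphs of $\what G_1^+$, which follows from the same arc argument as in your part (b). In (c), \lq crossing the open set $\pd F$' conflates topological and percolation openness; the correct statement is that an edge of $\what M$ with both endpoints closed cannot meet $\what G_1(\om)$, since distinct edges of the plane graph $\what M$ intersect only at shared endpoints, so its interior stays in a single face of $\what G_1(\om)$.
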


\begin{proof}
(a) This is immediate by planar duality.

(b) Note first that every vertex $w$ of $M$ inside $F$ satisfies $\om(w)=0$.
Since $F$ is bounded by a cycle of $\what G_1$, it is a non-empty, disjoint union $F=\bigcup_{i\in I} A_i$
of faces $A_i$ of $\what G_1$ (more precisely, the two sides of the equality differ on a set of 
Lebesgue measure $0$). Each $A_i$ is bounded, and 
contains a (unique) dual vertex $d_i$. It is
standard that the dual set $D=\{d_i: i\in I\}$ induces a connected 
graph $C_1(F)$ in $F$. 
Since
no edge $f$ of $C_1(F)$ intersects $\De F$, 
we have $\be_{\om_1}^+(f)=1$ for all such $f$.

(c)  It can be the case that $F\cap (V\cup\Phi_2)=\es$, in which case we take 
$C_2(F)$ to be the empty graph (this is the situation if and only if $F$ is a triangular
face of $\what G_1(\om)$). 
Suppose henceforth that
$F\cap (V\cup\Phi_2)\ne\es$ and note as above that $\om(w)=0$ for every  $w\in F\cap(V\cup \Phi_2)$.
It is a standard property of matching pairs of graphs that $F\cap (V\cup\Phi_2)$ induces a connected 
subgraph $C_2(F)$ of $F\cap \what G_2$.

Parts (b) and (c) make use of two so-called \lq standard' properties, full discussions of
which are omitted here. It suffices to prove the \lq 
standard' property of \emph{matching} pairs, 
since the corresponding property for dual pairs then
follows by passing to covering (or line)  graphs (see, for example, \cite[Sec.\ 2.6]{K82}). For matching pairs, an early 
reference is \cite[App.]{SE64}, and a more detailed account is found in \cite[Sec.\ 3, App.]{K82}
(see, in particular, Proposition A.1 of \cite{K82}).
The latter assumes slightly more than here on the 
mosaic $M$, but the methods apply notwithstanding.

(d) When $F$ is finite, so must be $C_1(F)$ and $C_2(F)$, since the embedding 
of $M$ is proper.
When $F$ is infinite,  the same holds of $C_1(F)$ and $C_2(F)$, since the faces
of $G$ are uniformly bounded.
\end{proof}

Recall the notation $N_G(\om)$ from Section \ref{ssec:percnot}.

\begin{proposition}\label{ll2}
Let $M =(V,E)\in\sQ$ be one-ended and embedded canonically in $\sH$,
and let $\omega\in\Om_V$. Then,
\begin{equation}
\begin{gathered}\label{eq:-11}
N_{G_1}(\om) =N_{\what G_1}(\om) = N_{\what G_1}(\be_{\om_1}), \\
N_{G_2}(1-\om) =N_{\what G_2}(1-\om) = N_{\what G_1^+}(\be^{+}_{\om_1}).
\end{gathered}
\end{equation}
\end{proposition}

\begin{proof}
Equation \eqref{eq:-11} holds by the  definition of $\be_{\om_1}$, and from Proposition \ref{prop:3}
on noting (for given $\om$) the one--one correspondence between infinite clusters 
of $\what G_2(1-\om)$ and of $\what G_1^+(\be_{\om_1}^+)$.  The
facial site in any face of $M$ is a surrogate for the diagonals of that face.
\end{proof}

\begin{remark}[Conformality]\label{rem4}
It is classical that every bond percolation model may be phrased as a site model on the so-called 
covering (or line) graph
(see, for example, \cite[p.\ 24]{GP}). While the converse is generally false, using Definition \ref{eq:sitebond}
we obtain a one-dependent bond model from the site model on the same graph; furthermore, the connectivity
relations of these two processes are identical.
It was proved by Smirnov \cite{Smir} that critical site percolation on the triangular lattice $\TT$ satisfies Cardy's formula, 
and moreover has properties of conformal invariance (see also \cite{CN1,CN2}).
By the above observation, the dependent bond process on $\TT$ has similar properties, 
and its dual process on the hexagonal lattice.
\end{remark}

\section{Amenable planar graphs with one end}\label{s7}

In this section, we prove Theorem \ref{m1}(a) for amenable, one-ended graphs;
see Remark \ref{rem:enh} for an explanation of part (b) of the theorem.
It is standard that such graphs are properly embeddable in the Euclidean plane,
denoted $\sH$ in this section.

Recall first that, for any infinite, quasi-transitive, amenable graph $G$, 
and invariant, insertion-tolerant  measure $\mu$, the number $N_G$ of infinite open clusters 
satisfies $\mu(N_G\le 1)=1$ (see \cite[Thm 7.9]{LP} for the transitive case, the quasi-transitive case is similar).
As in Section \ref{ssec:percnot}, $\ol N_G$ denotes the number
of infinite closed clusters.

\begin{lemma}\label{l73}
Let $M=(V,E)\in\sQ$ be amenable, one-ended, and embedded canonically in $\sH$, 
and let $(G_1,G_2)$
be a matching pair derived from $M$. 
Let $(\Om_V,\mu)$ be an ergodic, insertion-tolerant site percolation on $M$ satisfying the FKG lattice condition \eqref{fkgl}. 
Then
\begin{equation}\label{eq:-9}
 \mu\bigl((N_M, \ol N_M) = (1,1)\bigr)=\mu\bigl((N_{G_1},\ol N_{G_2}) = (1,1)\bigr)=0.
\end{equation}
\end{lemma}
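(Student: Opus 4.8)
The plan is to transfer the problem to the one-dependent bond process $\be_{\om_1}$ of Section \ref{sec:sbond} and exploit planar duality. By Proposition \ref{ll2}, applied to the matching pair $(G_1,G_2)$, we have $N(G_1)=N(\what G_1(\be_{\om_1}))$ and $\ol N(G_2)=N(\what G_1^+(\be^+_{\om_1}))$, so that $N(G_1)$ and $\ol N(G_2)$ are respectively the numbers of infinite open clusters of $\be_{\om_1}$ and of its dual $\be^+_{\om_1}$. The second equality in \eqref{eq:-9} thus asserts that these two counts cannot both equal $1$. By Lemma \ref{lem9} the bond measure is invariant, ergodic, and both weakly insertion-tolerant and weakly deletion-tolerant; moreover $\what G_1$, being obtained from the amenable graph $M$ by a bounded local decoration, is itself amenable, quasi-transitive and one-ended, as is its dual $\what G_1^+$.

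First I would establish the uniqueness bounds $N(G_1)\le 1$ and $\ol N(G_2)\le 1$ almost surely. Since the bond process is weakly insertion-tolerant and $\what G_1$ is amenable and quasi-transitive, the Burton--Keane argument recalled at the start of this section gives $N(G_1)\le 1$; the argument survives the weakening to \emph{weak} insertion-tolerance because producing a trifurcation requires switching only finitely many edges. Applying the same reasoning to $\be^+_{\om_1}$ on $\what G_1^+$, which is weakly insertion-tolerant precisely because $\be_{\om_1}$ is weakly deletion-tolerant, yields $\ol N(G_2)\le 1$. Hence both counts lie in $\{0,1\}$, and it remains only to exclude the value $(1,1)$.

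The heart of the proof, and the step I expect to be the main obstacle, is to show that $N(G_1)=\ol N(G_2)=1$ occurs with probability $0$. Suppose not; by ergodicity this event has probability $1$, so there exist a.s.\ a unique infinite open primal cluster $I$ and a unique infinite open dual cluster $I^+$, and these are disjoint connected subsets of $\sH$ because a primal edge and its dual edge are never simultaneously open. Since $M$ is amenable and one-ended it has quadratic growth (being quasi-isometric to the Euclidean plane), so by Theorem \ref{p21}(d) its automorphism group contains a copy of $\ZZ^2$ acting quasi-transitively, furnishing translations in two independent directions. My plan is a Zhang-type argument along a sequence of boxes $B_n$ with $|\partial B_n|/|B_n|\to 0$ (which exist by amenability), aligned with these directions: using the FKG inequality \eqref{fkgl} (which transfers to $\be_{\om_1}$ since $\om\mapsto\be_{\om_1}$ is increasing) together with a square-root trick, I would force, with positive probability, simultaneous primal connections to infinity from two opposite sides of a large box and dual connections to infinity from the remaining two sides. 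By uniqueness these arms belong to $I$ and $I^+$ respectively, and their linking around the box demands a primal--dual crossing, which is impossible. The delicate point, in the absence of the reflection symmetry available on $\ZZ^2$, is that one cannot simply equate the boundary-connection probabilities of opposite sides; I would therefore fall back on a Burton--Keane-style count, assigning mass to the interface arms where $I$ and $I^+$ meet $\partial B_n$ and using their non-crossing (unlinking) to show that coexistence would force a density of such interface points incompatible with $|\partial B_n|/|B_n|\to 0$.

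Finally I would deduce the first equality in \eqref{eq:-9} from the second. Taking the partition $\sF_4=\es\cup\sF_4$ exhibits $(M,M_*)$ as a matching pair with $G_1=M$ and $G_2=M_*$, so the case already treated gives $\mu\bigl((N(M),\ol N(M_*))=(1,1)\bigr)=0$. Since $M\subseteq M_*$, every infinite $0$-cluster of $M$ is contained in an infinite $0$-cluster of $M_*$, so $\ol N(M)\ge 1$ forces $\ol N(M_*)\ge 1$; and $\ol N(M_*)\le 1$ a.s., by the same uniqueness bound of the second paragraph applied to the matching pair $(M,M_*)$. Consequently, on the event $\{N(M)=1,\ \ol N(M)=1\}$ we have $\ol N(M_*)=1$, so this event is contained in the null event $\{(N(M),\ol N(M_*))=(1,1)\}$, which establishes the first equality.
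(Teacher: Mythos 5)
Your overall architecture is the one the paper itself follows (indeed, the paper explicitly offers the route of applying the relevant coexistence theorem to the one-dependent bond model $\be_{\om_1}$ of Section \ref{sec:sbond} via Proposition \ref{ll2}): reduce to a primal/dual pair of bond processes, establish $N(G_1),\ol N(G_2)\in\{0,1\}$ by a Burton--Keane argument, exclude the value $(1,1)$, and then deduce the first equality in \eqref{eq:-9} from the second via the inclusions $M\subseteq G_1$, $M\subseteq G_2$ (your version with the pair $(M,M_*)$ is equivalent to the paper's one-line remark). The opening and closing steps of your proposal are sound.

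The gap is in what you yourself identify as ``the heart of the proof''. The paper does not prove the non-coexistence statement from scratch: it first establishes (Theorem \ref{thm:embedqt}, via Seifter--Trofimov) that the canonical embedding is doubly periodic, and then invokes \cite[Thm 1.5]{DRT}, which is precisely the theorem that an invariant, ergodic, positively associated percolation on a doubly-periodic planar graph cannot simultaneously have a unique infinite primal cluster and a unique infinite dual cluster. That theorem (and its predecessors by Sheffield and by Bollob\'as--Riordan) is a substantial piece of work precisely because the Zhang argument fails in the absence of reflection/rotation symmetry --- the delicate point you correctly flag. Your proposed fallback, a ``Burton--Keane-style count assigning mass to interface arms \dots incompatible with $|\pd B_n|/|B_n|\to 0$'', is not an argument one can check: coexistence of two unique infinite clusters does not by itself force a positive density of interface points on $\pd B_n$ (each of $I$ and $I^+$ could meet $\pd B_n$ in a single arc), and making any such count work is essentially equivalent to reproving \cite{DRT} or \cite{Sheff}. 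As written, the key step is asserted rather than proved; the fix is simply to quote \cite[Thm 1.5]{DRT}, for which your reduction to the bond model $\be_{\om_1}$ (invariant, ergodic, and positively associated as an increasing image of $\mu$) together with the double periodicity supplied by Theorem \ref{thm:embedqt} provides exactly the required hypotheses.
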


A pair $\gamma$, $\gamma'$ of isometries of $\RR^2$
is said to act in a 
\emph{doubly periodic manner} on $G$ (in its canonical embedding) if they generate a subgroup of
$\Aut(G)$ that is isomorphic to $\ZZ^2$, and the embedding is called \emph{doubly periodic}
if such a pair exists.  In preparation for the proof of Lemma \ref{l73}, we note the following.

\begin{theorem}\label{thm:embedqt}
Let $G\in\sQ$ be amenable and one-ended. A
canonical embedding of  $G$ in $\RR^2$ is doubly periodic.
\end{theorem}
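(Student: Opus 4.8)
The plan is to invoke the growth dichotomy for amenable quasi-transitive planar graphs and then apply the rigidity of the canonical embedding. First I would observe that an amenable, one-ended graph $G\in\sQ$ has subexponential growth, and in fact, since it is quasi-isometric to the Euclidean plane (by the classification recalled after \eqref{am}, via \cite{Bab97}), it has exactly \emph{quadratic} growth; neither the linear growth of $\ZZ$ nor the exponential growth of the tree or hyperbolic plane is compatible with amenability together with one-endedness. This places us squarely in the hypothesis of Theorem \ref{p21}(d).

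Next I would apply Theorem \ref{p21}(d) of Seifter--Trofimov \cite{ST97}: since $\Aut(G)$ contains, as an abstract group, a subgroup $H\cong\ZZ^2$ that acts quasi-transitively on $G$, it remains only to realize the two generators of $H$ as a commuting pair of isometries $\ga,\ga'$ of $\RR^2$ generating a copy of $\ZZ^2$ inside $\Aut(G)$ acting on the \emph{embedded} graph. The bridge between the abstract automorphisms and isometries is Theorem \ref{p21}(c): in a canonical embedding, every automorphism of $G$ extends to an isometry of $\sH=\RR^2$. Thus the image of $H$ under the extension map is a subgroup of the isometry group $\mathrm{Isom}(\RR^2)$ that is isomorphic to $\ZZ^2$ and acts quasi-transitively on the embedded graph, which is precisely the assertion that the embedding is doubly periodic.

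The main obstacle I anticipate is verifying that the extension map $\Aut(G)\to\mathrm{Isom}(\RR^2)$ is injective on $H$, so that the image is genuinely isomorphic to $\ZZ^2$ rather than a proper quotient, and that the image consists of orientation-preserving \emph{translations} (or at least free-acting isometries generating a $\ZZ^2$), since a subgroup of $\mathrm{Isom}(\RR^2)$ abstractly isomorphic to $\ZZ^2$ need not a priori be a lattice of translations --- it could contain glide reflections or, pathologically, elements with fixed points. I would resolve this by noting that any isometry of $\RR^2$ with finite order has a fixed point, whereas the nontrivial elements of $H$ act freely on $V$ (they act freely because $H\cong\ZZ^2$ acts quasi-transitively with finite vertex-stabilizers on a graph quasi-isometric to $\RR^2$, so stabilizers are trivial after passing to a finite-index subgroup if needed); hence the images are infinite-order, fixed-point-free isometries, and a fixed-point-free isometry of the Euclidean plane of infinite order is a translation. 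Two commuting, independent translations generate exactly the required $\ZZ^2$.

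To make the argument fully rigorous I would, if necessary, pass to a finite-index subgroup of $H$ to ensure the action is free (which preserves the $\ZZ^2$ structure and the quasi-transitivity), and then confirm that the two translation vectors are linearly independent over $\RR$ --- this is forced because a $\ZZ^2$ acting cocompactly (equivalently, quasi-transitively with finitely many orbits) on $\RR^2$ cannot have all its translation vectors collinear, else the quotient would be a cylinder of infinite area and the action could not be quasi-transitive on a graph filling the plane. With linear independence established, $\ga,\ga'$ generate a genuine lattice $\ZZ^2$ of translations acting doubly periodically, completing the proof.
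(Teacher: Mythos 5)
Your proposal is correct and follows essentially the same route as the paper: establish that $G$ has quadratic growth, invoke Seifter--Trofimov (Theorem \ref{p21}(d)) to obtain a subgroup of $\Aut(G)$ isomorphic to $\ZZ^2$ acting quasi-transitively, and extend its generators to isometries of the plane via the canonical embedding (Theorem \ref{p21}(c)). The only differences are minor: the paper excludes exponential growth by a direct counting argument using the properness of the embedding (bounded Euclidean edge-lengths) rather than by appealing to the quasi-isometry classification, and your last two paragraphs about upgrading the generators to genuine translations are not needed, since the paper's definition of \lq doubly periodic' requires only a pair of isometries generating a subgroup of $\Aut(G)$ isomorphic to $\ZZ^2$, not a lattice of translations.
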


\begin{proof}
This may be proved in a number of ways, including using either Bieberbach's theorem on crystalline groups
\cite{LB12,AV97}
or Selberg's lemma \cite{Alp}. 
Instead, we use a more direct  route via the main theorem of Seifter and Trofimov \cite{ST97}
(see Theorem \ref{p21}(d)).

Viewed as a graph, $G$ has quadratic growth. This standard fact holds as follows.
By \cite[Thm 1.1]{Bab97}, $G$ has either linear, or quadratic, or exponential growth. As noted at
\cite[Thm 9.3(b)]{Cam04}, being one-ended, it cannot have linear growth. Finally, we rule out exponential growth.
Since $G$ is quasi-transitive, there exists $R<\oo$ such that, for all edges $\langle x,y\rangle$ of $G$, 
the distance between $x$ and $y$ in $\RR^2$ is no greater than $R$. Therefore, the $n$-ball
centred at vertex $v$ is contained in $B_n(v) :=v+[-nR,nR]^2$. By quasi-transitivity again, there exists $A<\oo$ such that, for all $v$, $B_n(v)$ contains
no more than $A(nR)^2$ vertices.

The theorem of \cite{ST97} may now be applied to find that $\Aut(G)$ has a finite-index subgroup
$F$ isomorphic to $\ZZ^2$. Thus $F$ is generated by a pair of automorphisms which,
by Theorem \ref{p21}(c),  extend to 
isometries of the embedding of $G$.
\end{proof}

\begin{proof}[Proof of Lemma \ref{l73}]
By insertion-tolerance and ergodicity, the four random variables 
featuring in \eqref{eq:-9} are each $\mu$-a.s.\
constant  and take values in $\{0,1\}$. 
By Theorem \ref{thm:embedqt} and \cite[Thm 1.5]{DRT},
\begin{equation}\label{eq:-8}
\mu\bigl((N_{G_1},\ol N_{G_2}) = (1,1)\bigr)=0.
\end{equation}
Arguments related to  but weaker than \cite[Thm 1.5]{DRT} are found in \cite{BR07,GP,Men1,Men2,Sheff}. 
Note that
\cite[Thm 1.5]{DRT} deals with bond percolation on planar graphs, 
whereas \eqref{eq:-8}  is concerned with site percolation on non-planar graphs.
The site model may be handled either by adapting the arguments of \cite{DRT} to site models, or
by applying \cite[Thm 1.5]{DRT} to the one-dependent bond model
constructed in the manner described in Section \ref{sec:sbond} (see 
\eqref{eq:sitebond} and Proposition \ref{ll2}). 
Non-planarity is avoided by working with the facial graphs
of Section \ref{sec:sbond}. 
The remaining part of \eqref{eq:-9} follows
from the fact that $\ol N_{M_*}=1$ $\mu$-a.s.\ on the event $\{\ol N_M=1\}$.
\end{proof}

\begin{corollary}\label{l72}
Let $G\in\sQ$ be amenable and one-ended, and consider site percolation on $G$.
Then $\PP_{\frac12}(N=0) = 1$.
\end{corollary}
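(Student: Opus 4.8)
The plan is to apply Lemma \ref{l73} with the mosaic taken to be $G$ itself, and then to exploit the symmetry between open and closed sites at $p=\frac12$. First I would observe that $G$, being one-ended and in $\sQ$, admits a canonical embedding in the Euclidean plane by Theorem \ref{p21}(c), and I fix one such. Taking the trivial partition $\sF_4(G)=\es\cup\sF_4(G)$ (that is, $F_1=\es$) produces a matching pair $(G_1,G_2)=(G,G_*)$ derived from the mosaic $M:=G$; the required quasi-transitive subgroup $\Ga\le\Aut(G)$ exists because $\Aut(G)$ acts quasi-transitively on both $G$ and its matching graph $G_*$. The product measure $\mu:=\PP_{\frac12}$ is invariant, ergodic, insertion-tolerant (since $\frac12\in(0,1)$), and satisfies the FKG lattice condition \eqref{fkgl}. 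Hence all the hypotheses of Lemma \ref{l73} are met, and it yields
$$
\PP_{\frac12}\bigl((N,\ol N)=(1,1)\bigr)=0,
$$
where $N=N(G)$ and $\ol N=\ol N(G)$ count the infinite $1$-clusters and infinite $0$-clusters of $G$.

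Next I would pin down the possible values of $N$ and $\ol N$. Because $\PP_{\frac12}$ is ergodic, each of $N$ and $\ol N$ is $\PP_{\frac12}$-a.s.\ equal to a constant. Because $G$ is amenable and $\PP_{\frac12}$ is invariant and insertion-tolerant, the uniqueness fact recalled just before Lemma \ref{l73} gives $N\le 1$ a.s.; applying the same fact to the involution that interchanges states $0$ and $1$ at every vertex (which preserves $\PP_{\frac12}$) gives $\ol N\le 1$ a.s.\ as well. Thus both constants lie in $\{0,1\}$. This same involution carries infinite $1$-clusters to infinite $0$-clusters and vice versa, so $N$ and $\ol N$ are equidistributed and therefore take the \emph{same} constant value.

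Finally I would combine these facts: if the common constant value were $1$, then $(N,\ol N)=(1,1)$ would hold $\PP_{\frac12}$-a.s., contradicting the displayed identity obtained from Lemma \ref{l73}. Hence the common value is $0$, i.e.\ $\PP_{\frac12}(N=0)=1$, as claimed. I expect the only genuine work to be the bookkeeping needed to verify the hypotheses of Lemma \ref{l73} when the mosaic is $G$ itself — in particular the existence of the canonical embedding and of the quasi-transitive subgroup $\Ga$ acting on the trivial matching pair $(G,G_*)$. The remaining ingredients, namely the open/closed symmetry at $p=\frac12$ together with ergodicity and the amenable uniqueness bound $N\le 1$, are standard, so there is no substantial analytic obstacle.
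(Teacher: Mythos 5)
Your proposal is correct and follows essentially the same route as the paper: apply Lemma \ref{l73} to $G$ itself (the paper simply invokes the first equality of \eqref{eq:-9} with $M=G$, whereas you spell out the trivial matching pair $(G,G_*)$), then use ergodicity, the amenable uniqueness bound, and the $0$--$1$ symmetry at $p=\tfrac12$ to rule out $(N,\ol N)=(1,1)$. The only cosmetic difference is that the paper phrases the final step as a contradiction starting from $\PP_{\frac12}(N\ge 1)>0$, while you argue directly that the common constant value of $N$ and $\ol N$ must be $0$.
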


\begin{proof}
Suppose that $\PP_{\frac12}(N\ge 1)>0$, so that $\PP_{\frac12}(N\ge 1)=1$ by ergodicity.
By amenability and symmetry,  we have that $\PP_{\frac12}(N=\ol N=1)=1$. This contradicts Lemma \ref{l73}.
\end{proof}

\begin{lemma}\label{lm33}
Let $M=(V,E)\in\sQ$ be amenable, one-ended, and embedded canonically in $\sH$, and let $(G_1,G_2)$
be a matching pair derived from $M$. We have for site percolation that
$\PP_p(\ol N_{G_2}=1)=1$ for $p<\pcs (G_1)$.
\end{lemma}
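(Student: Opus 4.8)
The plan is to fix $p<\pcs(G_1)$ and to show that $\ol N(G_2)=1$ holds $\PP_p$-almost surely, which amounts to excluding the two values $0$ and (by amenability) everything above $1$. First I would record the soft inputs. By the definition of $\pcs(G_1)$ together with the ergodicity of $\PP_p$ (so that $N(G_1)$ is $\PP_p$-a.s.\ constant), we have $N(G_1)=0$ a.s. Next, $G_2$ is quasi-transitive and amenable, being quasi-isometric to $M$ since only finitely many diagonals are added per face; and the $0$-sites of $\PP_p$ form a Bernoulli field of density $1-p$, which is insertion-tolerant for the $0$-state. Hence the uniqueness theorem for infinite clusters on amenable quasi-transitive graphs (recalled at the start of Section \ref{s7}) gives $\ol N(G_2)\le 1$, and ergodicity makes $\ol N(G_2)$ a.s.\ constant. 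Thus $\ol N(G_2)\in\{0,1\}$, and it remains only to exclude the value $0$.

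The second step is to pass to the dependent bond process. By Proposition \ref{ll2} (equation \eqref{eq:-12}),
\[
N(G_1)=N(G_1(\be_\om)),\qquad \ol N(G_2)=N(G_1^+(\be^{+}_\om)),
\]
so the two quantities of interest are exactly the numbers of infinite clusters of the one-dependent bond process on $\what G_1$ and of its planar dual. By Lemma \ref{lem9} this process is invariant, ergodic, weakly insertion-tolerant and weakly deletion-tolerant, and by Theorem \ref{thm:embedqt} the canonical embedding is doubly periodic. The claim $\ol N(G_2)=1$ is therefore equivalent to the assertion that, whenever the primal process has no infinite cluster and $p$ lies \emph{strictly} below the critical value, the dual process has an infinite cluster.

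The crux, and the main obstacle, is to rule out the degenerate regime in which neither the primal nor the dual percolates, that is $(N(G_1),\ol N(G_2))=(0,0)$. This cannot be excluded by duality and amenability alone---on the square lattice it occurs at $p=\frac12$---and excluding it for \emph{all} $p<\pcs(G_1)$ is precisely a sharpness statement: the transition occurs at a single value rather than across an interval. I would obtain this from the doubly-periodic structure (Theorem \ref{thm:embedqt}) together with the sharpness input \cite[Thm 1.5]{DRT} already used to prove Lemma \ref{l73}; applied to the monotone family $\be_\om$, it confines the regime $(0,0)$ to the single critical point, which coincides with $\pcs(G_1)$ since $N(G_1(\be_\om))\ge 1$ if and only if $p>\pcs(G_1)$. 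Consequently, for every $p<\pcs(G_1)$ the dual process percolates, so $\ol N(G_2)\ge 1$; combined with $\ol N(G_2)\le 1$ from the first step, this yields $\ol N(G_2)=1$. The combinatorial intuition is that finite $1$-clusters of $G_1$ are encircled by $0$-circuits of $G_2$ (the matching property invoked in Proposition \ref{prop:3}), but promoting the presence of such circuits at all scales into a genuine infinite $0$-cluster is exactly where the sharpness input is indispensable.
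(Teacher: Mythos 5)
Your reduction is sound and agrees with the paper's: by amenability, ergodicity, and insertion-tolerance of the closed sites, $\ol N(G_2)$ is a.s.\ constant and at most $1$, so the entire content of the lemma is to exclude $(N(G_1),\ol N(G_2))=(0,0)$ throughout $p<\pcs(G_1)$. The gap lies in how you exclude it. You propose to apply \cite[Thm 1.5]{DRT} to the one-dependent bond family $\be_\om$ and read off that the $(0,0)$ regime is confined to a single point which you then identify with $\pcs(G_1)$. That theorem concerns Bernoulli (more generally, random-cluster) bond percolation on doubly-periodic planar graphs; it is not stated or proved for general monotone one-dependent families, and the paper's own invocation of it in Lemma \ref{l73} extracts only non-coexistence of primal and dual infinite clusters at a fixed parameter (ruling out $(1,1)$). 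Ruling out $(0,0)$ on the whole subcritical interval is a strictly stronger sharpness/critical-point-duality statement, and your identification of the dual critical point with $1-\pcs(G_1)$ presupposes essentially the identity $\pcs(G_1)+\pcs(G_2)=1$ of Theorem \ref{t31-0}, of which this lemma is an ingredient. As written, the crucial step is outsourced to a citation that does not cover it.

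The paper closes the same gap by a concrete mechanism that you should compare with. Suppose $\PP_p(\ol N(G_2)=1)<1$; amenability and ergodicity then force $\PP_p(\ol N(G_2)=0)=1$. Fix $v_0$ and let $\La_n$ be the $d_M$-ball of radius $n$. By Kesten's matching-pair lemma \cite[Prop.\ 2.1]{K82}, on the (almost sure) event that $\pd\La_n\nlra\oo$ in $G_2$ through closed vertices, there is a circuit of \emph{open} vertices of $G_1$ surrounding $\La_n$. Such a circuit forces an open $G_1$-cluster of radius comparable to $n$ or more, and since $p<\pcs(G_1)$ the exponential decay of subcritical cluster radii for quasi-transitive graphs \cite[Thm 3]{AV07} bounds the probability by $\sum_{k\ge n}Ae^{-a(n+k)}$, which cannot equal $1$ for large $n$. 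Note two things: the sharpness input is applied directly to the \emph{site} model on $G_1$, where it is available off the shelf, rather than to the derived bond process; and your closing heuristic has the matching duality the wrong way round for this direction --- what is needed is that failure of a closed $G_2$-connection to infinity forces an open $G_1$-circuit (and one then contradicts subcriticality), not that finite $1$-clusters are encircled by $0$-circuits which must somehow be promoted to an infinite $0$-cluster.
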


\begin{proof}
Let $p\in (0,\pcs (G_1))$ be such that $\PP_{p}(\ol N_{G_2}=1)<1$. By 
amenability and ergodicity, we have that 
\begin{equation}\label{eq:new101}
\PP_{p}(\ol N_{G_2}=0)=1.
\end{equation}
Therefore, $\PP_p(N_{G_1}=\ol N_{G_2}=0)=1$.
There is a standard geometrical argument based on subcritical exponential decay 
that leads to a contradiction, as follows. 

Fix a vertex $v_0$ of $M=(V,E)$, and let $\ga$ be a semi-infinite geodesic of $M$ with endvertex $v_0$.
Let $n\ge 1$, and let $\La_n=\{u\in V: d_M(u,v_0)\le n\}$.
By \cite[Prop.\ 2.1]{K82}, 
if $\pd \La_n$ intersects no infinite closed path of $G_2$, 
there exists some open circuit of $G_1$ with $\La_n$ in its inside. 
There exists $c=c(M)>0$ such that, if the last event occurs, then
for some $k\ge 1$ and some $v\in\ga \cap \pd \La_{n+k}$, we have that $v$ lies in an
open path of $G_1$ of length at least $c(n+k)$.
By \cite[Thm 3]{AV07} for example, and \eqref{eq:new101},
there exist $A, a>0$ such that,  
$$
1 \le \sum_{k\ge 1} Ae^{-a(n+k)}.
$$
This cannot hold for large $n$, and the lemma is proved.
\end{proof}

We turn to equation \eqref{eq:-5}. In this amenable case, this is equivalent to the following extension of
classical results of Sykes and Essam \cite{SE64} (see also  \cite{vdB81}).

\begin{theorem}\label{t31-0}
Let $M=(V,E)\in\sQ$ be amenable, one-ended, and embedded canonically in $\sH$, and let $(G_1,G_2)$
be a matching pair derived from $M$. Then
$$
\pcs (G_1)+\pcs (G_2)=1.
$$
\end{theorem}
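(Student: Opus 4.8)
The plan is to prove the two inequalities $\pcs(G_1)+\pcs(G_2)\ge 1$ and $\pcs(G_1)+\pcs(G_2)\le 1$ separately, exploiting the duality between $G_1$ and $G_2$ in the amenable setting established by the preceding lemmas. The key structural fact to invoke is the matching-pair complementarity encoded in Proposition \ref{prop:3} and the consequence recorded in Lemma \ref{lm33}: below $\pcs(G_1)$ there is $\PP_p$-a.s.\ a unique infinite $0$-cluster in $G_2$, which geometrically blocks the existence of an infinite $1$-cluster in $G_1$. Throughout, amenability, ergodicity, and insertion-tolerance of the product measure $\PP_p$ guarantee (via \cite[Thm 7.9]{LP}) that each of $N(G_1)$, $\ol N(G_2)$ is $\PP_p$-a.s.\ equal to a constant in $\{0,1\}$, so the whole argument reduces to locating the single threshold where each indicator flips.

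First I would establish $\pcs(G_1)+\pcs(G_2)\ge 1$. Suppose for contradiction that $\pcs(G_1)+\pcs(G_2)<1$, and pick $p$ with $\pcs(G_1)<p<1-\pcs(G_2)$. At such $p$ we have $\PP_p(N(G_1)=1)=1$ since $p>\pcs(G_1)$; on the other hand $1-p>\pcs(G_2)$, and by applying the same reasoning to the complementary configuration $\ol\om$ (whose open-site density in $G_2$ is $1-p$), we obtain $\PP_p(\ol N(G_2)=1)=1$. Hence $\PP_p\bigl((N(G_1),\ol N(G_2))=(1,1)\bigr)=1$, which directly contradicts Lemma \ref{l73}, equation \eqref{eq:-8}. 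This yields the first inequality.

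For the reverse inequality $\pcs(G_1)+\pcs(G_2)\le 1$, I would argue that for every $p<\pcs(G_1)$ one has $1-p\le\pcs(G_2)$, equivalently $\PP_{p}$-a.s.\ there is no infinite $1$-cluster in $G_2$ at density $1-p$. This is exactly the content delivered by Lemma \ref{lm33}: for $p<\pcs(G_1)$, $\PP_p(\ol N(G_2)=1)=1$, so the infinite $0$-cluster in $G_2$ at density $1-p$ of closed sites exists and is unique. But an infinite $0$-cluster of $G_2$ is, by the matching-pair duality of Proposition \ref{prop:3}, incompatible with an infinite $1$-cluster of $G_2$ coexisting at the same configuration; combined with amenable uniqueness this forces the density $1-p$ to lie at or below $\pcs(G_2)$. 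Letting $p\uparrow\pcs(G_1)$ gives $1-\pcs(G_1)\le\pcs(G_2)$, i.e.\ $\pcs(G_1)+\pcs(G_2)\le 1$. Together with the first part this yields equality.

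The step I expect to be the main obstacle is the reverse inequality, specifically the geometric bookkeeping needed to turn \lq\lq unique infinite $0$-cluster of $G_2$ exists'' into \lq\lq no infinite $1$-cluster of $G_2$ can coexist.'' The subtlety is that $G_1$ and $G_2$ are matching (not merely dual) graphs sharing the mosaic $M$, so the blocking argument must be run through the facial-site triangulation $\what M$ and the cluster correspondence $C_1(F)\leftrightarrow C_2(F)$ of Proposition \ref{prop:3}, rather than through naive planar duality; one must be careful that the closed circuit of $G_1$ surrounding $\La_n$ produced in Lemma \ref{lm33} genuinely separates, which is where the canonical embedding, one-endedness, and the \lq\lq standard'' matching-pair separation property of \cite{SE64,K82} are essential. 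A cleaner route, which I would present, is to read both inequalities off the two lemmas directly: the $\ge$ direction from Lemma \ref{l73} and the $\le$ direction from Lemma \ref{lm33}, thereby avoiding a fresh geometric proof and instead citing the exponential-decay circuit argument already packaged in Lemma \ref{lm33}.
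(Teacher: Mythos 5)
Your overall strategy is exactly the paper's: the inequality $\pcs(G_1)+\pcs(G_2)\ge 1$ from Lemma \ref{l73}, and the inequality $\pcs(G_1)+\pcs(G_2)\le 1$ from Lemma \ref{lm33}. The first half is correct and matches the paper essentially verbatim. The second half, however, contains a genuine logical inversion as written. Lemma \ref{lm33} tells you that for $p<\pcs(G_1)$ there is, $\PP_p$-a.s., an infinite $0$-cluster of $G_2$; since the closed sites have density $1-p$, this is precisely the statement that site percolation on $G_2$ at parameter $1-p$ percolates, and therefore it forces $1-p\ge\pcs(G_2)$ --- not $1-p\le\pcs(G_2)$ as you assert. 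Letting $p\uparrow\pcs(G_1)$ then yields $1-\pcs(G_1)\ge\pcs(G_2)$, which is the desired $\pcs(G_1)+\pcs(G_2)\le 1$. Your concluding line commits the complementary algebra error: $1-\pcs(G_1)\le\pcs(G_2)$ is equivalent to $\pcs(G_1)+\pcs(G_2)\ge 1$, not $\le 1$, so even granting your intermediate claim you would have reproved the first inequality rather than the second.

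The reasoning you interpose to bridge the gap --- that an infinite $0$-cluster of $G_2$ is ``incompatible with an infinite $1$-cluster of $G_2$'' by Proposition \ref{prop:3}, and that this forces the density $1-p$ to lie at or below $\pcs(G_2)$ --- is both unnecessary and incorrect. Proposition \ref{prop:3} relates clusters of $\what G_1(\om)$, $\what G_2(\ol\om)$, and the dual $\what G_1^+(\be^+_{\om_1})$; it says nothing about coexistence of $0$- and $1$-clusters within $G_2$ itself. More fundamentally, the existence of an infinite cluster at a given density places that density at or \emph{above} the critical point, never below. Once the direction of the inequality is corrected, no further geometric ``blocking'' argument is needed at this stage: all the geometry (the closed circuit of $G_1$ surrounding $\La_n$, exponential decay, and so on) is already packaged inside the proof of Lemma \ref{lm33}, and the deduction of the theorem from the two lemmas is the two-line argument the paper gives.
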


\begin{proof}
By Lemma \ref{l73}, whenever $p>\pcs (G_1)$, we have $1-p\leq \pcs (G_2)$, 
which implies $\pcs (G_1)+\pcs (G_2)\geq 1$.
By Lemma \ref{lm33}, whenever $p<\pcs (G_1)$, 
we have $1-p\geq \pcs (G_2)$, which implies $\pcs (G_1)+\pcs (G_2)\leq 1$. 
\end{proof}

\section{Non-amenable graphs with one end}
\label{ss5}

In this section, we prove Theorem \ref{m1}(a) for non-amenable, 
one-ended graphs $G=(V,E)\in\sQ$; 
see Remark \ref{rem:enh} for an explanation of part (b) of the theorem. 
Recall from Section \ref{ssec:percnot} 
that $N$ (\resp, $\ol N$)  denotes the number of $1$-clusters (\resp, $0$-clusters).

\begin{lemma}\label{l35}
Let $M=(V,E)\in\sQ$ be one-ended and embedded canonically in the hyperbolic plane, and let $(G_1,G_2)$
be a matching pair derived from $M$.  For $\om\in\Om_V$,
\begin{equation*}
\PP_p\bigl((N_{G_1},\ol N_{G_2})\in\{(0,1),(1,0),(\infty,\infty)\}\bigr)=1.
\end{equation*}
\end{lemma}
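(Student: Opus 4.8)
The plan is to reduce the site-percolation statement on the matching pair $(G_1,G_2)$ to a statement about a single bond percolation process on the dual graph $\what G_1^+$, and then invoke the non-amenable planar duality result of Lemma \ref{ll25}. The machinery of Section \ref{sec:sbond} is tailor-made for this: by Proposition \ref{ll2} we have the identifications $N(G_1(\om)) = N(G_1(\be_\om))$ and $N(G_2(\ol\om)) = N(G_1^+(\be^+_\om))$. Thus the pair $(N(G_1),\ol N(G_2))$ for the site model coincides in law with the pair $(N,N^+)$ of infinite-cluster counts for the one-dependent bond process $\be_{\om_1}$ on $\what G_1$ and its dual process on $\what G_1^+$. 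The desired trichotomy $\{(0,1),(1,0),(\infty,\infty)\}$ is then precisely the conclusion of Lemma \ref{ll25}, once I verify its hypotheses.

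So the core of the argument is a checklist of hypotheses for Lemma \ref{ll25} applied to the bond measure $\mu$ that is the law of $\be_{\om_1}$ on $\what G_1$. First, $\what G_1$ must be non-amenable and one-ended and embedded canonically; this follows because $M$ (hence $G_1$, hence the triangulation-like augmentation $\what G_1$) inherits non-amenability and one-endedness from $M\in\sQ$, and $M$ is embedded canonically in the hyperbolic plane by hypothesis. Second, $\mu$ must be invariant, ergodic, weakly insertion-tolerant, and weakly deletion-tolerant — but all four of these are supplied directly by Lemma \ref{lem9}, which established exactly these properties for the law of $\be_{\om_1}$ when $\om$ has law $\PP_p$ with $p\in(0,1)$. (The boundary cases $p\in\{0,1\}$ are trivial, since then $N(G_1)$ and $\ol N(G_2)$ are deterministic and the claimed event holds with probability one.) With the hypotheses in place, Lemma \ref{ll25} yields
\begin{equation*}
\mu\bigl((N,N^+)\in\{(0,1),(1,0),(\infty,\infty)\}\bigr)=1,
\end{equation*}
and translating back through Proposition \ref{ll2} gives the stated conclusion for $(N(G_1),\ol N(G_2))$.

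I expect the main obstacle to be the careful bookkeeping in translating between the facial-site picture and the dual-graph picture, rather than any deep new idea. Specifically, one must be sure that the one-one correspondence between infinite $0$-clusters of $\what G_2(\ol\om)$ and infinite open clusters of $\what G_1^+(\be^+_{\om_1})$ — established for faces in Proposition \ref{prop:3}(c),(d) and aggregated in Proposition \ref{ll2} — genuinely identifies $\ol N(G_2)$ with $N^+$ as \emph{random variables on a common probability space}, not merely in distribution; this is what allows the joint event $\{(N(G_1),\ol N(G_2))\in\dots\}$ to inherit the joint trichotomy for $(N,N^+)$. The other point requiring a word is that Lemma \ref{ll25} is stated for the dual process of the \emph{same} bond percolation, and one should confirm that $\be^+_{\om_1}$ is indeed that dual process under the planar duality \eqref{pdl}, which it is by construction. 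Once these identifications are made transparent, the proof is a short citation of Lemmas \ref{lem9} and \ref{ll25} together with Proposition \ref{ll2}.
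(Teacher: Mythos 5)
Your proposal is correct and follows essentially the same route as the paper's own proof: both apply Proposition \ref{ll2} to identify $(N(G_1),\ol N(G_2))$ with the infinite-cluster counts of the dependent bond process $\be_{\om_1}$ and its dual, use Lemma \ref{lem9} for weak insertion/deletion-tolerance and ergodicity, and conclude via Lemma \ref{ll25}. Your additional remarks (the $p\in\{0,1\}$ boundary cases and the almost-sure rather than merely distributional identification) are sensible clarifications but not departures from the paper's argument.
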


\begin{proof}
We fix a canonical embedding of $M$. By Proposition \ref{ll2},  
\begin{equation*}
N_{G_1}(\om) = N_{\what G_1}(\be_{\om_1}), \qq N_{G_2}(1-\om) = N_{\what G_1^+}(\be_{\om_1}^+).
\end{equation*}
By Lemma \ref{lem9}, the law of  $\be_{\omega_1}$ is weakly deletion-tolerant, 
weakly-insertion tolerant, and ergodic, and the claim follows by  Lemma \ref{ll25}.
\end{proof}

\begin{proof}[Proof of Theorem \ref{m1}(a)]
By Lemmas \ref{lsu} and \ref{l32}, we have the following for site percolation on either $G_1$ or $G_2$:
\begin{align*}
\text{if } p < \pc,\q &\PP_p(N = 0)=0,\\
\text{if } \pc<p<\pu,\q &\PP_p(N=\infty)=1,\\
\text{if } p > \pu,\q &\PP_p(N=1)=1,
\end{align*}
where $\pc$, $\pu$ are the critical values appropriate to the graph in question.

By Lemma \ref{l35},  $N_{G_1}=1$ if and only if $\ol N_{G_2}=0$, whence 
$\pu(G_1)=1-\pc(G_2)$. 
\end{proof}

\begin{corollary}\label{c56}
Let $G\in\sQ$ be one-ended and embedded canonically in $\sH$, and suppose $G$ is non-amenable. 
Then
\begin{equation*}
\PP_p\bigl((N,\ol N)\in \{(0,0),(0,1),(1,0),(0,\infty),(\infty,0),(\infty,\infty)\}\bigr)=1.
\end{equation*}
\end{corollary}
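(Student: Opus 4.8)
The plan is to first confine $(N,\ol N)$ to $\{0,1,\oo\}^2$ using the general structure theory, and then to eliminate the three pairs $(1,1)$, $(1,\oo)$, $(\oo,1)$ by playing the matching pair $(G,G_*)$ against the matching pair $(G_*,G)$. Since $\PP_p$ is ergodic and $G$ is non-amenable, planar, and one-ended, Lemma \ref{l32}(b) applies both to $1$-percolation and (with density $1-p$) to $0$-percolation on $G$, so that each of $N=N(G)$ and $\ol N=\ol N(G)$ is $\PP_p$-a.s.\ constant and takes a value in $\{0,1,\oo\}$. Thus $(N,\ol N)\in\{0,1,\oo\}^2$ a.s., and it remains only to rule out the three excluded pairs.

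The key structural input is that $G$ is a spanning subgraph of its matching graph $G_*$. Consequently, for any fixed configuration, every $1$-path (\resp, $0$-path) of $G$ is also a $1$-path (\resp, $0$-path) of $G_*$, so each infinite $1$-cluster (\resp, $0$-cluster) of $G$ is contained in an infinite $1$-cluster (\resp, $0$-cluster) of $G_*$. This yields the monotonicity
$$
N(G)\ge 1\ \Lra\ N(G_*)\ge 1,\qquad \ol N(G)\ge 1\ \Lra\ \ol N(G_*)\ge 1,
$$
holding $\PP_p$-a.s.

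Next I would invoke Lemma \ref{l35} twice, for the two matching pairs derived from the mosaic $M=G$. Taking the partition $F_1=\es$, $F_2=\sF_4(G)$ gives the matching pair $(G_1,G_2)=(G,G_*)$ and hence $(N(G),\ol N(G_*))\in\{(0,1),(1,0),(\oo,\oo)\}$ a.s.; taking instead $F_1=\sF_4(G)$, $F_2=\es$ gives the matching pair $(G_*,G)$ and hence $(N(G_*),\ol N(G))\in\{(0,1),(1,0),(\oo,\oo)\}$ a.s. With these in hand, the excluded pairs are dispatched as follows. If $N=1$, the first application forces $\ol N(G_*)=0$, which by the monotonicity contradicts $\ol N\ge 1$; this rules out both $(1,1)$ and $(1,\oo)$. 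If instead $\ol N=1$, the second application forces $N(G_*)=0$, which contradicts $N(G_*)\ge 1$ whenever $N\in\{1,\oo\}$; this rules out $(\oo,1)$ (and re-confirms $(1,1)$). Hence $(N,\ol N)$ lies a.s.\ in the complement of $\{(1,1),(1,\oo),(\oo,1)\}$ within $\{0,1,\oo\}^2$, which is exactly the asserted set.

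The routine checks are that $M=G$ is an admissible mosaic—one-ended, in $\sQ$, canonically embedded in $\sH$ with every face a cycle, by Remark \ref{rem:emb}(d)—and that in each partition the common automorphism subgroup $\Ga$ acts quasi-transitively on both members of the pair, so that Lemma \ref{l35} genuinely applies; both are immediate from the quasi-transitivity of $G$ and of $G_*$. The only genuinely delicate point is the monotonicity claim, where one must ensure that passing from $G$ to $G_*$ cannot destroy an infinite cluster: this is clear because adjoining the diagonals only adds edges, so clusters may merge but can never split, and an infinite cluster of $G$ therefore persists inside an infinite cluster of $G_*$.
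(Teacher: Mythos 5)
Your proposal is correct and follows essentially the same route as the paper: confine $(N,\ol N)$ to $\{0,1,\oo\}^2$ via Lemma \ref{l32}, then eliminate $(1,1)$, $(1,\oo)$, $(\oo,1)$ by combining Lemma \ref{l35} for the matching pair $(G,G_*)$ with the monotonicity $\ol N\ge 1\Rightarrow \ol N_*\ge 1$ (and symmetrically $N\ge1\Rightarrow N_*\ge1$). The only cosmetic difference is that you spell out the second application of Lemma \ref{l35} to the pair $(G_*,G)$ where the paper simply writes \lq\lq by symmetry''.
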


\begin{proof}
By Lemma \ref{l32}, $\PP_p$-a.s.\ the pair $(N,\ol N)$ takes some given value in the set $\{0,1,\oo\}^2$.
We need to eliminate the vectors $(1,1)$, $(1,\oo)$, and $(\oo,1)$.
If either of the vectors $(1,1)$ and $(1,\oo)$ have strictly positive probability, then
$\PP_p(N=1,\, \ol N_*\ge 1)>0$, in contradiction of Lemma \ref{l35}
applied to the matching pair $(G,G_*)$.
By symmetry,  $\PP_p((N,\ol N)\neq (\infty,1))=1$, and the corollary follows.
\end{proof}

\section{Proof of Theorem \ref{sc1}}\label{psc1}

Let $G$ be a graph satisfying the assumptions of the theorem. 
We work with the largest finite connected subgraph $G_B$ of $G$ contained in a large 
bounded region $B$  
(with boundary $\pd B$) of the natural geometry of $G$, and shall
let $B$ expand to fill the space. The numbers of finite faces, vertices, edges of $G_B$ satisfy Euler's formula:
$f_B+v_B=e_B+1$. Since the smallest possible face is a triangle, we have $f_B\le \frac23 e_B$; 
since the degree of interior vertices is $7$ or more, there exists $c>0$ such that $e_B \ge \frac72 (v_B-  c| \pd B|)$.
This contradicts Euler's formula unless  $e_B/|\pd B|$ is bounded above, 
which is to say that the natural geometry is the hyperbolic plane.
Hence, $G$ is non-amenable.  By \cite[Thm 2]{HP19}, we have $\pcs=\pcs(G)< \frac12$.

By the symmetry of the interval $(\pcs,1-\pcs)$ around $\frac12$, it suffices to show that $\PP_p(N=\oo)=1$
for $p\in (\pcs,1-\pcs)$. This in turn is implied by Lemma \ref{lsu} and the inequality 
\begin{equation}\label{simple}
1-\pcs\le \pus.
\end{equation}
Inequality \eqref{simple} holds by \eqref{eq:-6} when is $G$ non-amenable and one-ended.
In the remaining case when $G$ has infinitely many ends, \eqref{simple} is trivial since $\pus=1$ by Theorem \ref{m3}.

\section{Proof of Theorem \ref{sc2}}
\label{psc2}

Let $G$ be a graph satisfying the assumptions of
the theorem, and embedded canonically.
By Lemma \ref{l32}, symmetry, and the assumption $\PP_{\frac12}(N\ge 1)=1$, 
\begin{equation}\label{eq:new20}
\PP_{\frac12}\bigl((N,\ol N)\in\{(1,1),(\infty,\infty)\}\bigr)=1.
\end{equation}

By Theorem \ref{p13}, the following four cases may occur:

\begin{letlist}
\item $G$ is amenable and one-ended. By Lemma \ref{l73}, $\PP_{\frac12}(N=0)=1$. Hence, in this case, the hypothesis of the theorem is invalid. 

\item $G$ is non-amenable and one-ended. By Corollary \ref{c56} and \eqref{eq:new20}, 
subject to the percolation assumption, we have $\PP_{\frac12}(N= \ol N = \oo)=1$.

\item $G$ has two ends. By Theorem~\ref{m2}, $\pcs =1$. Hence $\PP_{\frac12}(N=0)=1$, and the hypothesis is invalid.

\item $G$ has infinitely many ends. By Theorem~\ref{m3}, $\pus =1$. Under the hypothesis of the theorem,
it follows by symmetry that $\PP_{\frac12}((N,\ol N)=(\infty,\infty))=1$. 
\end{letlist}

\section*{Acknowledgements} 
The authors thank the two referees for their helpful and detailed remarks.
ZL's research was supported by National Science Foundation grant 1608896 and Simons Collaboration Grant 638143.

\providecommand{\bysame}{\leavevmode\hbox to3em{\hrulefill}\thinspace}
\providecommand{\MR}{\relax\ifhmode\unskip\space\fi MR }
\providecommand{\MRhref}[2]{%
  \href{http://www.ams.org/mathscinet-getitem?mr=#1}{#2}
}
\providecommand{\href}[2]{#2}

\end{document}